\documentclass{amsart}
\usepackage{etex}
\usepackage{xcolor}
\usepackage{amssymb,latexsym,amsmath,extarrows}
\usepackage{amsthm}
\usepackage{mathabx}
\usepackage{graphicx,mathrsfs,comment}
\usepackage{hyperref,url}
\usepackage{pict2e}
\usepackage{enumerate}
\usepackage{hyperref}
\usepackage{bm}

\usepackage{cancel}

\usepackage{amstext}
\usepackage{bbm} 

\numberwithin{equation}{section}

\newcommand{\orcid}[1]{\href{https://orcid.org/#1}{\texttt{ORCID: #1}}}

\usepackage{esint}

\newcommand{\cI}{\mathcal I}

\newtheorem{theorem}{Theorem}[section]
\newtheorem{lemma}[theorem]{Lemma}

\newtheorem*{remark*}{Remark}

\newtheorem{definition}[theorem]{Definition}

\makeatletter
\newcommand{\barredsum}{%
  \DOTSB\mathop{\mathpalette\@barredsum\relax}\slimits@
}
\newcommand{\@barredsum}[2]{%
  \begingroup
  \sbox\z@{$#1\sum$}%
  \setlength{\unitlength}{\dimexpr2pt+\ht\z@+\dp\z@\relax}%
  \@barredsumthickness{#1}%
  \vphantom{\@barredsumbar}%
  \ooalign{$\m@th#1\sum$\cr\hidewidth$#1\@barredsumbar$\hidewidth\cr}%
  \endgroup
}
\newcommand{\@barredsumbar}{%
  \vcenter{\hbox{\begin{picture}(0,1)\roundcap\Line(0,0)(0,1)\end{picture}}}%
}
\newcommand{\@barredsumthickness}[1]{
  \linethickness{%
    1.25\fontdimen8
      \ifx#1\displaystyle\textfont\else
      \ifx#1\textstyle\textfont\else
      \ifx#1\scriptstyle\scriptfont\else
      \scriptscriptfont\fi\fi\fi 3
  }%
}
\makeatother

\newcommand{\be}{\beta}

\newcommand{\e}{\varepsilon}

\newcommand{\ka}{\kappa}
\newcommand{\la}{\lambda}

\newcommand{\om}{\omega}
\newcommand{\Om}{\Omega}

\newcommand{\cG}{\mathcal {G}}

\newcommand{\bB}{{\bf B}}

\newcommand{\bT}{{\bf T}}

\newcommand{\Kc}{K_\circ}

\newcommand{\cB}{{\mathcal B}}

\newcommand{\cQ}{{\mathcal Q}}

\newcommand{\bbr}{\textup{br}}

\newcommand{\T}{\mathbb{T}  }
\newcommand{\B}{\mathbb{B}}

\newcommand{\supp}{{\rm supp}}

\begin{document}

\title[Wave Packet Density in Local Smoothing: Refinements and Limitations]{Wave Packet Density in Local Smoothing: Refinements and Limitations}

\date{}

\author{Xiangyu Wang} \address{Xiangyu Wang \\ Department of Mathematics\\ University of Illinois Urbana-Champaign\\  \orcid{0009-0003-5983-6961}} \email{xw70@illinois.edu}

\begin{abstract}
We study the role of wave packet density estimates in the local smoothing problem for the wave equation. Building on the framework of Gan, He, Li, and Wu, we incorporate an additional multilinear geometric estimate to obtain an improved local smoothing range in high even dimensions.We then investigate the limitations of the wave packet density method. By constructing a geometric configuration that nearly saturates the relevant density estimate, we show that the existing wave packet density bound is nearly sharp in certain regimes, even in the Euclidean setting. 
\end{abstract}

\maketitle


\section{Introduction}
\label{sec:introduction}

Local smoothing estimates measure the gain in regularity that arises when a solution to a dispersive equation is averaged in time. For the wave equation, let
\[
u(x,t)=e^{it\sqrt{-\Delta}}f(x)
\]
denote the half-wave evolution on $\mathbb{R}^{n}$. At each fixed time, the classical estimates of Peral and Seeger--Sogge--Stein show that the half-wave propagator loses
\[
s_p=(n-1)\left|\frac12-\frac1p\right|
\]
derivatives in $L^p$ \cite{Peral1980,SeegerSoggeStein1991}. Sogge observed that averaging over a compact time interval produces additional regularity and formulated the local smoothing conjecture \cite{Sogge1991}. In the range $p>2$, the conjecture predicts that, for every $\varepsilon>0$,
\[
\|e^{it\sqrt{-\Delta}}f\|_{L^p(\mathbb{R}^n\times[1,2])}
\lesssim_{\varepsilon}
\|f\|_{L^p_{s_p-\frac1p+\varepsilon}(\mathbb{R}^n)}
\]
whenever
\[
p\geq \frac{2n}{n-1}.
\]
Thus, compared with the fixed-time estimate, one expects an almost $1/p$ derivative gain.

The local smoothing problem is closely connected with restriction theory, Kakeya-type phenomena, and oscillatory integral estimates. Wolff initiated a systematic approach to the problem through decoupling inequalities for the cone \cite{Wolff2000}. A major breakthrough was the sharp $\ell^2$ decoupling theorem of Bourgain and Demeter \cite{BourgainDemeter2015}, which implies the local smoothing conjecture in the range
\[
p>2+\frac{4}{n-1}.
\]
The corresponding variable-coefficient estimates for Fourier integral operators satisfying the cinematic curvature condition were established by Beltran, Hickman, and Sogge \cite{BeltranHickmanSogge2020}. In two spatial dimensions, Guth, Wang, and Zhang proved the full Euclidean local smoothing conjecture by establishing a sharp square function estimate for the cone \cite{GuthWangZhang2020}; the corresponding result for Fourier integral operators and wave equations on compact Riemannian surfaces was subsequently obtained by Gao, Liu, Miao, and Xi \cite{GaoLiuMiaoXi2023SF}.

In dimensions $n\geq3$, substantial further progress was recently obtained by Gan, He, Li, and Wu \cite{GanHeLiWu}. Their work introduces a wave packet density adapted to Lorentz rescaling and combines refined decoupling, broad estimates, two-ends reductions, and geometric incidence estimates. This provides an induction-on-scales framework for the local smoothing problem that goes beyond the decoupling exponent. For wave equations on general compact Riemannian manifolds, Minicozzi and Sogge constructed examples showing that the local smoothing estimate cannot hold below
\begin{equation} \label{p_plus}
    p_n^{+}
:=
\begin{cases}
\displaystyle 2+\frac{8}{3n-3}, & n \text{ odd},\\[6pt]
\displaystyle 2+\frac{8}{3n-2}, & n \text{ even},
\end{cases}
\end{equation}
see \cite{MinicozziSogge1997}. Gan--He--Li--Wu proved the conjectured sharp range $p > p_n^{+}$ in all odd dimensions. In even dimensions, they obtained the estimate for
\[
p > 
2+\frac{8}{3n-2-\omega(n)},
\qquad
\omega(n)
=
\frac{4(7n-2)}{3n^2+5n+14},
\]
which is within $O(n^{-3})$ of the conjectured critical exponent. They also obtained improved local smoothing estimates for the Euclidean wave equation.

The first purpose of this paper is to further refine the framework of \cite{GanHeLiWu}. Our improvement relies on an additional multilinear geometric estimate. As a consequence, for every even dimension \(n\geq 4\), we improve the exponent of Gan--He--Li--Wu from
\[
p > 2+\frac{8}{3n-2-\omega(n)}
\]
to
\[
    p > p'(n):=
\begin{cases}
\displaystyle
\frac{166}{57},
& n=4,\\[8pt]
\displaystyle
2+\frac{8}{3n-2-\omega'(n)},
& n\geq 6 \text{ even},
\end{cases}
\]
where
\[
\omega'(n):=
\frac{4(3n+2)}{2n^2+3n+2}.
\]

It is also natural to ask whether substantially stronger local smoothing estimates can be obtained in the Euclidean setting by exploiting additional geometric information specific to Euclidean space. The second purpose of this paper is to investigate this question. We identify a geometric obstruction that is already present in the flat setting. More precisely, we construct an explicit configuration of wave packets in the Euclidean setting, based on a hyperbolic-type geometry, for which the relevant wave packet density estimate is nearly sharp. This example demonstrates an intrinsic limitation of the current wave packet density approach, even in Euclidean space.

$\bullet$ \textbf{Structure of the article.} In Section \ref{sec:preliminaries}, we recall some key concepts, quantities, and estimates from \cite{GanHeLiWu}. In Section \ref{sec:refinement}, we obtain a refined local smoothing range in high even dimensions. In Section \ref{sec:counterexample}, we construct a nearly extremal example in the Euclidean setting that reveals a geometric obstruction already present in flat space, illustrating a limitation of the current form of the wave packet density approach to the local smoothing problem.

$\bullet$ \textbf{Notation and Parameter. } Throughout this paper, we adopt the notation, definitions, quantities, and assumptions introduced in \cite{GanHeLiWu}.

$\bullet$ \textbf{Acknowledgments. } The author is deeply grateful to Professor Xiaochun Li for a valuable and insightful discussion.
\medskip

$\bullet$ \textbf{Funding. } The author acknowledges financial support from the Department of Mathematics, University of Illinois Urbana-Champaign.

$\bullet$ \textbf{AI Usage. } ChatGPT was used to refine the English writing and to perform some necessary computations. 

\bigskip

\section{Preliminaries}\label{sec:preliminaries}
In this section, we recall some key concepts from \cite{GanHeLiWu} and introduce a multilinear geometric estimate.

\subsection{Wave Packet Density}\label{subsec:wavepacketdensity}
Following \cite{GanHeLiWu}, we use wave packet density as an
$L^2$-based substitute for $\|f\|_{\infty}$. Rather than measuring
the pointwise size of $f$, this quantity measures the concentration
of the $L^2$ mass of its wave packets. Its definition in terms of
nonnegative sums of squared $L^2$ norms makes it compatible with
almost orthogonality and restriction to subcollections of wave
packets, and hence convenient for induction on scales.

For an $s$-cap $\tau\in\Theta_s$, where $R^{-1/2}\leq s\leq 1$,
let
\[
    V_{\tau,R}:=(Rs^2)\tau^*,
\]
where $\tau^*$ denotes the dual box associated with $\tau$.
Thus, $V_{\tau,R}$ is a box centered at the origin with dimensions
\[
    Rs^2\times Rs\times\cdots\times Rs.
\]
We write $V\parallel V_{\tau,R}$ when $V$ is a translate of
$V_{\tau,R}$. As in \cite{GanHeLiWu}, $\mathbb{T}_{\theta}(z_0)$
denotes the collection of wave packet planks associated with
$\theta$ at scale $R$ in $B_R^{n+1}(z_0)$, and $T^{\flat}$
denotes the base box associated with $T$.

We recall the following definition from
\cite[Definition~4.1]{GanHeLiWu}.

\begin{definition}[Wave packet density]
\label{def:wave-packet-density}
Let $g\in\mathcal{S}(\mathbb{R}^n)$ satisfy
$\supp\widehat{g}\subset B_1^n \setminus B_{1/2}^n$, and let
\[
    B_R^{n+1}(z_0)
    =
    B_R^{n+1}(x_0,t_0)
    \subset B_{\lambda}^{n+1}.
\]
The wave packet density of $g$ in $B_R^{n+1}(z_0)$ is defined by
\begin{equation}
\label{eq:wave-packet-density}
\begin{aligned}
    &\mathcal{W}(g,B_R^{n+1}(z_0)) \\
    &\qquad :=
    \sup_{R^{-1/2}\leq s\leq 1}
    \sup_{\tau\in\Theta_s}
    \sup_{V\parallel V_{\tau,R}}
    \left(
        \frac{1}{|V|}
        \sum_{\substack{
            \theta\in\Theta_{R^{-1/2}}\\
            \theta\subset\tau
        }}
        \sum_{\substack{
            T\in\mathbb{T}_{\theta}(z_0)\\
            T^{\flat}\subset V
        }}
        \|g_T\|_{L^2(\mathbb{R}^n)}^2
    \right)^{1/2}.
\end{aligned}
\end{equation}
\end{definition}

Let $\mathcal{F}$ be a type-$\mathbf{1}$ Fourier
integral operator, with the phase and amplitude normalized
as in \cite[Sections~3.1--3.2]{GanHeLiWu}, and let
$\mathcal{F}^{\lambda}$ denote its rescaled version. The main result of \cite{GanHeLiWu} is the following.
The main result of \cite{GanHeLiWu} is the following.

\begin{theorem}\label{thm:GHLW-density}
For every $\varepsilon>0$ and
\[
p \geq p(n):=
\begin{cases}
\displaystyle 2+\frac{8}{3n-3}, & n \text{ odd},\\[8pt]
\displaystyle 2+\frac{8}{3n-2-\omega(n)}, & n \text{ even},
\end{cases}
\qquad
\omega(n):=\frac{4(7n-2)}{3n^{2}+5n+14},
\]
one has
\begin{equation}\label{wave-packet-density-estimate}
    \|\mathcal{F}^{\lambda}f\|_{L^{p}(B_{R}^{n+1})}
    \lesssim_{\varepsilon}
    R^{(n-1)\left(\frac12-\frac1p\right)+\varepsilon}
    \|f\|_{2}^{\frac{2}{p}}
    \mathcal{W}(f,B_{R}^{n+1})^{1-\frac{2}{p}},
\end{equation}
for every $R \in [1, \lambda^{1-\varepsilon}]$ and all $f\in\mathcal{S}(\mathbb{R}^n)$ with
\begin{equation} \label{supp-f-hat}
    \supp \hat{f} \subset \mathbb{A}^n(1) := \{(\xi',\xi_n)\in\mathbb{R}^n : 1/2 \leq \xi_n \leq 2,\ |\xi'|\leq \xi_n\}
\end{equation}

\end{theorem}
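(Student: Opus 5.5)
This theorem is the main result of \cite{GanHeLiWu}, recalled here without proof. Following their approach, I would prove \eqref{wave-packet-density-estimate} by induction on the scale $R$. I fix $p$ and $\varepsilon$, and assume the estimate holds at all scales $R'\le R/2$ with constant $C_\varepsilon$. The aim is then to recover it at scale $R$ with the same constant. The wave packet density $\mathcal{W}$ is designed to make this work: it is defined through nonnegative sums of $\|g_T\|_2^2$. Hence, when $f$ is restricted to a subcollection of wave packets, or rescaled to a smaller ball, $\mathcal{W}$ can only decrease or transforms in a controlled way. For the Lorentz rescaling in particular, a sector $\tau$ of aperture $s$ maps to the full cone, and the density at scale $R$ over $\tau$ controls the density at scale $Rs^2$ of the rescaled function. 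This is exactly why $V_{\tau,R}$ has dimensions $Rs^2\times Rs\times\cdots$.

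First I would perform a broad/narrow decomposition at a small frequency scale $K^{-1}$. In the narrow case, $\mathcal{F}^\lambda f$ concentrates near a lower-dimensional set of directions. There I would use $\ell^2$ decoupling for the cone in lower dimension, or the trivial bound, together with Lorentz rescaling of each $K^{-1}$-sector. I would then apply the induction hypothesis at scale $R/K^2$, with the density of the rescaled piece bounded by $\mathcal{W}(f,B_R)$. The gain of a power of $K$ closes the induction when $p$ is above the stated threshold. For the $k$-broad part, I would first reduce, via a two-ends argument, to wave packets that are not concentrated in a small ball along their tube. Next I would apply refined decoupling to bound the $L^p$ norm by a product of two factors. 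The first is $\|f\|_2^{2}$ times a power of $R$. The second is the maximal number of wave packets through a typical high-multiplicity cube, which is an incidence count.

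The heart of the proof is to bound that incidence count by $\mathcal{W}$. For each point, the relevant planks through it organize into the sets $V\parallel V_{\tau,R}$. Counting them in turn requires a Kakeya/Nikodym-type estimate for planks of the cone, of the kind given by polynomial partitioning or a Wolff-type hairbrush bound, which holds under the cinematic curvature condition. The exponent $p(n)$ comes from optimizing the parameters between this geometric incidence bound, the broad estimate, and the narrow rescaling loss. The odd/even dichotomy appears because the $k$-broad estimate is most efficient at $k\approx (n+1)/2$, which is an integer only for odd $n$. For even $n$ one must interpolate between two consecutive $k$, and this produces the correction $\omega(n)$.

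I expect the main obstacle to be the geometric incidence estimate in the broad case. One must show that the two-ends, broad configurations of planks saturating refined decoupling are controlled by the density $\mathcal{W}$, uniformly over all scales $s$ and the Lorentz-rescaled boxes. Here the variable-coefficient setting only permits estimates that are valid for curved planks, and the constraint $R\le\lambda^{1-\varepsilon}$ is what keeps the plank geometry close to Euclidean at scale $R$. I would first try a multilinear Kakeya bound for transverse families of planks. I would then feed the resulting density bound into the refined decoupling inequality and verify the numerology against $p(n)$.
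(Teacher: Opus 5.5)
The paper does not prove this theorem; it quotes it from Gan--He--Li--Wu, and Section~\ref{sec:refinement} reproduces their scheme in even dimensions. Measured against that scheme, your outline lists the right ingredients but leaves out the mechanism that actually produces $p(n)$, and the step you put in its place is unproved.

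\textbf{The broad case.} The narrow part plays no role in the numerology: by \eqref{broad2linearthreshold} it only needs $p\ge 2+\frac{2}{n-1}$ once $k\ge 3$. Everything is decided in the broad part. There you propose refined decoupling, followed by a bound on the multiplicity in terms of $\mathcal W$ via a Kakeya/Nikodym or multilinear Kakeya estimate. That bound is the entire difficulty, you give no argument for it, and it is not how the multiplicity is handled.

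In Lemma~\ref{refined-decoupling-estimate}, which is an estimate at $q_n=\frac{2(n+1)}{n-1}>p(n)$, the multiplicity enters through $\mu^{\frac{2}{n-1}}(\#\mathbb T)^{-\frac{1}{n-1}}$. Here $\mu$ is controlled by two-ends data, not by $\mathcal W$:
\begin{itemize}
\item the hairbrush bound (Lemma~\ref{hairbrush}), together with the incidence counts, gives $lm\mu\lesssim K^{O(1)}\#\mathbb T$;
\item $\mu\lessapprox m$ from the iterated pigeonholing then yields $\mu\lessapprox(\#\mathbb T)^{1/2}l^{-1/2}$.
\end{itemize}
This is a gain of $R^{-\frac{n-3}{4(n+1)}}l^{-\frac{1}{2(n+1)}}$ over the target at $q_n$.

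To reach $p(n)$, this gain is interpolated with two lower-exponent estimates that lose:
\begin{itemize}
\item the $k$-broad bound at $r_n=\frac{2(n+k+1)}{n+k-1}$ (Lemma~\ref{sharp-l2-broad-estimate}), which loses $R^{\frac12-\frac1{r_n}}$;
\item the $L^2$ bound (Lemma~\ref{original-l2-estimate}), which loses $l^{1/2}\sigma^{\frac{1}{n+k+1}}$.
\end{itemize}
$p(n)$ is the smallest exponent for which some convex combination of the three estimates makes the net powers of $R$, $l$ and $\sigma$ harmless. The two-ends parameter $l$ (the number of $R^{1/2}$-cubes a plank meets) is what makes this balancing possible, and it does not appear in your plan.

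\textbf{The parity.} Your explanation of the odd/even dichotomy is not what happens. The broad--narrow reduction commits to a single $k$, and the argument does not interpolate between $BL_{k,A}$ bounds for two consecutive values of $k$. In even dimensions $k=\frac{n+4}{2}$ is fixed, which is not a neighbour of $\frac{n+1}{2}$. Consequently:
\begin{itemize}
\item $r_n=\frac{6(n+2)}{3n+2}$;
\item the broad bound loses $R^{\frac{2}{3(n+2)}}$;
\item the $L^2$ bound loses $l^{1/2}\sigma^{\frac{2}{3(n+2)}}$.
\end{itemize}
The correction $\omega(n)$ is simply what the three-term interpolation yields at this $k$, once the $l$- and $\sigma$-factors are absorbed (compare \eqref{interpolation-factor}).

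A multilinear intersection estimate of the kind you suggest does occur, but in a different role. Lemma~\ref{multilinear-estimates}, which generalizes Lemma~5.8 of Gan--He--Li--Wu, is used only to bound $\sigma$ (Lemma~\ref{sigma-new-bound}). That is the main new input behind this paper's improvement from $\omega(n)$ to $\omega'(n)$.

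To turn your outline into a proof you would need:
\begin{itemize}
\item the two-ends reduction producing $l$ and $\sigma$;
\item the hairbrush inequality;
\item the sharp $k$-broad estimate in terms of $\mathcal W$;
\item the $L^2$ estimate;
\item an explicit choice of interpolation weights.
\end{itemize}
It is this last step, not a direct incidence bound in terms of $\mathcal W$, that determines $p(n)$.
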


In this paper, we refine the above result in even dimensions as follows.

\begin{theorem}\label{thm:refined-GHLW-density}
Let $n \geq 4$ be even. For every $\varepsilon>0$, the estimate
\eqref{wave-packet-density-estimate} holds for all $p \geq p'(n)$, all $R \in [1,\lambda^{1-\varepsilon}]$, and all $f\in\mathcal{S}(\mathbb{R}^n)$ satisfying \eqref{supp-f-hat}. 
\end{theorem}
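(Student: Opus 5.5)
We follow the induction-on-scales scheme of \cite{GanHeLiWu} and change only the step where even dimensions lose to the odd case. Write $Q_p(R)$ for the best constant in \eqref{wave-packet-density-estimate} at scale $R$. Assume the bound $Q_p(r)\lesssim_\e r^{(n-1)(\frac12-\frac1p)+\e}$ for all scales $r\le R^{1-\delta}$; we aim to close the induction at scale $R$.

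\textbf{Reduction to the broad part.} First apply the broad--narrow decomposition of \cite{GanHeLiWu}. The narrow contributions, which concentrate near lower-dimensional subspaces of directions, are handled by Lorentz rescaling together with the induction hypothesis. The wave packet density is compatible with Lorentz rescaling, since $V_{\tau,R}$ maps to the corresponding box at the rescaled scale. Hence these contributions cost nothing new, and for $p$ above the decoupling exponent in lower dimensions they are acceptable. This leaves the $k$-broad part, with $k=\frac n2+1$ as in the even-dimensional analysis of \cite{GanHeLiWu}. On the broad part, apply refined decoupling and the two-ends reduction. This bounds $\|\mathcal F^\la f\|_{L^p}$ by an incidence quantity: the number of $R^{1/2}$-balls that meet many wave packets with $k$-transversal directions, weighted by $\mathcal W$.

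\textbf{The new geometric input.} In \cite{GanHeLiWu} the even-dimensional count comes from two bounds. One is a Kakeya-type/polynomial-partitioning bound, which is sharp when the packets cluster near an $\frac n2$-dimensional variety. The other is a bilinear estimate, and the exponent $\omega(n)$ arises from optimizing between them. We insert an additional multilinear geometric estimate, stated in Section~\ref{sec:preliminaries}: a $k$-linear Kakeya-type inequality for planks in the cone geometry, valid at the level of wave packet density. Its role is to bound the number of $k$-rich balls by
\[
\Big(\prod_{j}\#\T_j\Big)^{1/(k-1)}
\]
up to the density factor. This improves the count in the intermediate regime where neither the polynomial bound nor the bilinear bound is efficient. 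Concretely, we will split according to the parameters used in \cite{GanHeLiWu}: the richness $\mu$, the density ratio, and the dimension of the tangent variety. In each case we take the minimum of three bounds instead of two.

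\textbf{Optimization and main obstacle.} Combining the three incidence bounds gives a bound of the form
\[
\|\mathcal F^\la f\|_{L^p}\lesssim R^{(n-1)(\frac12-\frac1p)+\e}\,\|f\|_2^{2/p}\,\mathcal W^{1-2/p}
\]
provided a family of linear inequalities in the exponent parameters holds. Solving the resulting linear program gives the threshold $p'(n)$. For $n\ge6$ this yields $\omega'(n)=\frac{4(3n+2)}{2n^2+3n+2}$. For $n=4$ a different case is extremal and gives $166/57$, which we will check separately. The main obstacle is proving the multilinear estimate with the density normalization $\mathcal W$ rather than $\|f\|_\infty$. The plank directions range over $k$-transversal caps of varying sizes $s$, and the sup in Definition~\ref{def:wave-packet-density} is taken over all $V\parallel V_{\tau,R}$. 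We plan to prove it by first applying the multilinear Kakeya theorem to the base boxes $T^\flat$. We then handle the $V$-localization by covering each $V$ with balls and summing $\|g_T\|_2^2$, so that each tube family contributes at most $|V|\,\mathcal W^2$. The second-hardest step is verifying that the case analysis closes, including the endpoint cases, at exactly $p'(n)$.
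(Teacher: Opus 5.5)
Your outline has the right overall shape: broad--narrow reduction, two-ends and refined decoupling, one extra multilinear input, then re-optimization. But the step that produces the improvement is misidentified, and what you put in its place would not do the job.

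\textbf{The geometric input.} The paper's new input is not a global multilinear Kakeya inequality normalized by $\mathcal W$. It is an elementary local fact. Inside an $R^{1/2}$-ball, an $R$-plank with direction $\xi$ is comparable to an $R^{2\delta}\times R^{1/2}\times\cdots\times R^{1/2}$ slab with normal $\vec v=\partial_z\phi^\lambda(z;\xi)$. This follows from the formulas for $\partial_u\gamma^\lambda,\partial_t\gamma^\lambda$ and Euler's identity. Hence $k-1$ planks with $|\vec v_1\wedge\cdots\wedge\vec v_{k-1}|\gtrsim K^{-O(1)}$ satisfy $|\bigcap_i T_i|\lesssim K^{O(1)}R^{(n-k+2)/2}$. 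The density never appears in this estimate, so the ``main obstacle'' you name is not where the work lies. It is also unclear what applying multilinear Kakeya to the base boxes $T^\flat$ would yield, since these are $n$-dimensional spatial footprints, not tubes; nothing of that kind is needed.

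\textbf{How the estimate enters.} More importantly, you never say how the estimate is used. The paper re-runs the two-ends algorithm with $\mu$ pigeonholed as the $k$-broad cardinality. Then every surviving $K^2$-ball sees $k-1$ transverse caps each carrying $\gtrsim\mu$ planks, even after the $\mathrm{br},2$ restriction discards one cap. Summing over $(k-1)$-tuples inside each $R^{1/2}$-cube gives $\sigma\lesssim K^{O(1)}R^{(n+2-k)/2}\bar m^{k-1}\mu^{1-k}$. Combine this with the hairbrush bound $lm\mu\lesssim K^{O(1)}\#\mathbb T$ and the stabilized relations $\mu\lesssim\kappa^{O(1)}m$, $\bar m\le\kappa m$. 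The result is $\mu\lesssim K^{O(1)}(\#\mathbb T)^{1/2}l^{-1/2}\min\{1,R^{(n-k+2)/2}/\sigma\}^{1/(2(k-1))}$, and this bound is what gets inserted into refined decoupling.

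\textbf{The numerology.} This part is unsupported, and with your choice of $k$ it fails. The paper interpolates three concrete bounds:
\begin{enumerate}
\item refined decoupling at $q_n$, whose gain decreases as $\sigma$ grows;
\item the sharp $k$-broad estimate at $r_n=\frac{2(n+k+1)}{n+k-1}$, with loss $R^{1/(n+k+1)}$;
\item the $L^2$ bound $l^{1/2}\min\{\sigma^{1/(n+k+1)},R^{1/4}\}\|f\|_2$, which grows with $\sigma$.
\end{enumerate}
It takes $k=\frac{n+4}{2}$, not $\frac n2+1$. For $n\ge6$ it uses the weights $\theta_q=\frac{16(n+1)}{(2n-1)(3n+2)}$, $\theta_r=\frac{3(2n^2-5n-14)}{(2n-1)(3n+2)}$, $\theta_2=\frac{24}{(2n-1)(3n+2)}$. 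For these weights the worst-case $R$-exponent over $l\le R^{1/2}$ and $\sigma\le R^{(n+1)/2}$ cancels exactly. For $n=4$, $\theta_r$ would be negative, so the optimum moves to $\theta_r=0$ with $(\theta_q,\theta_2)=(\frac{65}{83},\frac{18}{83})$, which gives $166/57$.

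If you redo this optimization with $k=\frac n2+1$, the broad exponent becomes $2+\frac{8}{3n}$ with the larger loss $R^{2/(3n+4)}$, and the geometric gain weakens. One checks the result is strictly worse. For $n=4$ it collapses to the bilinear case and gives $86/29>166/57$. For large $n$ the denominator is off from $3n-2$ by an $O(1)$ shift rather than $O(1/n)$.

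So ``solving the linear program gives $p'(n)$'' is an unverified assertion. The case split by ``dimension of the tangent variety'' that you describe plays no role in the actual argument.
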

The wave packet density estimate is stronger than the corresponding local smoothing estimate in the Euclidean setting. Indeed, the wave packet density does not count wave packets that pass transversely through a given box. As a consequence, for certain configurations, the wave packet density can be strictly smaller than the quantity naturally associated with the local smoothing problem. In fact, when $n$ is even, the conjectured critical exponent for local smoothing on manifolds also appears to provide a lower bound for wave packet density estimates on $\mathbb{R}^{n+1}$. In this paper, we construct an explicit wave packet configuration illustrating this phenomenon. Our example shows that, even in the Euclidean setting, the wave packet density estimate encounters an obstruction at essentially the same exponent arising from the manifold local smoothing problem.

\begin{theorem}\label{thm:counterexample}
Let $n\geq 3$ and $R \gg 1$, and define
\[
p_n^{+}
:=
2+\frac{8}{3n-2}
\]
Then there exist functions $f_R$ with
\[
\operatorname{supp}\widehat{f_R}\subset B_1^n \setminus B_{1/2}^n
\]
such that for all $2 < p < p_n^{+}$
\[
\left\|e^{it\sqrt{-\Delta}}f_R\right\|_{L^p(B_R^{n+1})}
\gtrsim
R^{(n-1)\left(\frac12-\frac1p\right)+c_{n,p}}
\|f_R\|_2^{\frac{2}{p}}
\mathcal{W}(f_R,B_R^{n+1})^{1-\frac{2}{p}}
\]
for some constant $c_{n,p}>0$.
\end{theorem}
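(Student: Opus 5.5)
The plan is to construct $f_R$ as a sum of unimodular wave packets placed along a family of light rays that all lie on a ruled quadric of "hyperbolic" (split) signature. We then compare three quantities: $\|f_R\|_2^2$, which counts the packets; $\|e^{it\sqrt{-\Delta}}f_R\|_{L^p(B_R^{n+1})}$, which should be large because the packets are concentrated in a thin neighborhood of a low-dimensional set; and $\mathcal{W}(f_R,B_R^{n+1})$, which should be small because the rays on the quadric are pairwise twisted. Twisted rays never line up inside a single box $V\parallel V_{\tau,R}$, even though they crowd into the same region of space-time. This is exactly the mechanism noted before the statement: $\mathcal{W}$ ignores packets crossing a box transversally.

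\textbf{Step 1 (geometry).} Write $n+1=2m$ or $2m+1$. Consider a quadric $\Sigma\subset\mathbb{R}^{n+1}$ of dimension $d$, to be optimized at the end, of the form $\{(x,t): Q(x,t)=c\}$ with $Q$ of split signature, intersected with a $d+1$-dimensional affine subspace. We need $\Sigma$ to carry a $(d-1)$-parameter family $\mathcal{L}$ of null lines (light rays $t\mapsto (x_0+t\omega,t)$ with $|\omega|=1$), analogous to the rulings of a hyperboloid of one sheet. Two properties are required:
\begin{enumerate}[(i)]
\item the directions $\omega$ of lines in $\mathcal{L}$ sweep out a $(d-1)$-dimensional subset of $S^{n-1}$;
\item two lines of $\mathcal{L}$ whose directions lie in a common $s$-cap are quantitatively skew, at distance $\gtrsim s\cdot(\text{parameter separation})\cdot R$ over $B_R$.
\end{enumerate}
We then pick an $R^{-1/2}$-separated subfamily and, for each chosen line, a wave packet $T$ of scale $R$ along it with $\|g_T\|_2=1$. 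The phases are chosen to cohere on $\Sigma$, or random signs are used and we argue in $L^2$. Let $N\approx R^{(d-1)/2}$ denote the number of packets.

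\textbf{Step 2 (lower bound for the $L^p$ norm).} By $L^2$ orthogonality in time slices, $\|e^{it\sqrt{-\Delta}}f_R\|_{L^2(B_R)}^2\approx RN$. All of this mass lies in $N_{R^{1/2}}(\Sigma)\cap B_R$, which has volume $\approx R^{d}\,R^{(n+1-d)/2}$. Hölder then gives
\[
\|u\|_{L^p(B_R)}\gtrsim (RN)^{1/2}\,\bigl(R^{d}R^{(n+1-d)/2}\bigr)^{\frac1p-\frac12}.
\]

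\textbf{Step 3 (density).} This is the main obstacle. For each $s\in[R^{-1/2},1]$, each $\tau\in\Theta_s$ and each $V\parallel V_{\tau,R}$, we bound the number of chosen packets with $\theta\subset\tau$ and $T^\flat\subset V$. Directions in $\tau$ give $\lesssim (sR^{1/2})^{d-1}$ lines. Among these, property (ii) should force those with base in a single $V$ to have parameters within an $O(s^{-1}R^{-1/2})$-type window, giving a count like $(sR^{1/2})^{d-1}\min(1,\dots)$. The resulting bound on $\mathcal{W}^2$ is this count divided by $|V|=R^{n}s^{n+1}$. I would first verify the two endpoints $s=R^{-1/2}$ and $s=1$, then check that the expression is monotone or convex in $\log s$, so the supremum is attained at an endpoint. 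The delicate part is making (ii) uniform, which is where the split signature is needed: rulings of a definite quadric would not separate.

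\textbf{Step 4 (numerology).} Insert Steps 1–3 into the ratio
\[
\|u\|_p\Big/\Bigl(R^{(n-1)(\frac12-\frac1p)}\,\|f_R\|_2^{2/p}\,\mathcal{W}^{1-\frac2p}\Bigr).
\]
This yields a power $R^{c(d,n,p)}$, with $c$ linear in $1/p$. Choosing $d$ optimally, roughly $d\approx (n+2)/2$ (so that the threshold corresponds to the even-dimensional Minicozzi–Sogge count), the exponent $c$ is positive exactly when $p<2+\frac{8}{3n-2}$. Taking $c_{n,p}=c(d,n,p)$ for this $d$ then gives the stated lower bound for all $2<p<p_n^+$.

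If the coherent-phase construction complicates the $L^2$-on-neighborhood computation in Step 2, I would instead use random signs and take expectations, since Step 2 relies only on $L^2$ mass and Hölder.
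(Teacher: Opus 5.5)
There is a genuine gap, and it lies in the packet count of Step 1, not in the density estimate you flag as the main obstacle.

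You use a $(d-1)$-parameter family of null lines that sweeps $\Sigma$ once, with one packet per $R^{-1/2}$-direction cap. So $N\approx R^{(d-1)/2}$, and the packets are essentially disjoint. The example is then no better than a single wave packet.

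Concretely, put your Step 2 into Step 4 with the best possible density $\mathcal{W}(f_R,B_R^{n+1})^2\approx R^{-\frac{n-1}{2}}$. It cannot be smaller, since at $s=R^{-1/2}$ a single footprint already gives this value. The left side has exponent $\frac{n+1+d}{2p}-\frac n4$ and the right side has exponent $\frac{n-1}{4}+\frac{d-n}{2p}$. Their difference is
\[
\frac{2n+1}{2p}-\frac{2n-1}{4}.
\]
This is independent of $d$ and positive only for $p<2+\frac{4}{2n-1}<p_n^+$. At $p=p_n^+$ it equals $-\frac{n}{2(3n+2)}$.

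Computing $\|e^{it\sqrt{-\Delta}}f_R\|_{L^p(B_R^{n+1})}$ exactly, rather than by H\"older on the whole neighborhood, only improves this to $p<\frac{2n}{n-1}$. That is still below $p_n^+$ for $n\ge 3$. So the claim in Step 4, that optimizing $d$ produces $p_n^+$, is false.

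A symptom of the problem is that Step 3 is trivial in your setting. The direction count alone gives $(sR^{1/2})^{d-1}/(R^ns^{n+1})\le R^{-\frac{n-1}{2}}$, so property (ii) is never used.

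The missing idea is to use \emph{all} null lines on the hyperboloid $\{|x'|^2-t^2=c^2R^2,\ x''=0\}$, $x'\in\mathbb{R}^l$. For each of the $\sim R^{\frac{l-1}{2}}$ directions $\omega\in S^{l-1}$, the paper places $\sim R^{\frac{l-2}{2}}$ \emph{parallel} packets through the points $cRu$, $u\in S^{l-1}\cap\omega^\perp$. This gives $\#\mathbb{T}_R\sim R^{\frac{2l-3}{2}}$, and the packets pile up on the neighborhood $U_R$ with average multiplicity $\sim R^{\frac{l-3}{2}}$.

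The density is then controlled by the spread of these base points over a sphere of radius $cR$, not by skewness. Packets with $\theta_\omega\subset\tau$ and footprint in one $V$ must have $u$ in an $O(s)$-cap. This gives the count $(R^{1/2}s)^{2l-3}$ and
\[
\mathcal{W}^2\lesssim R^{\frac{2l-3}{2}-n}s^{2l-n-4}.
\]
This bound is independent of $s$ exactly when $l=\frac n2+2$, which is the optimal dimension. With the full family and your $d\approx\frac{n+2}{2}$, you would only reach $2+\frac{8}{3n}$.

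For odd $n$, neither $l=\frac{n+3}{2}$ nor $l=\frac{n+5}{2}$ suffices; both give $2+\frac{8}{3n-1}$. The paper takes $l=\frac{n+3}{2}$ and adds $\sim R^{1/2}$ shifts $q\omega$ in the short direction. These keep the lines within $R^{1/2}$ of the hyperboloid. They also make the supremum over $s$ occur at the interior scale $s=R^{-1/4}$, not at an endpoint as your Step 3 expects.
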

Consequently, an estimate of the form
\[
    \left\|e^{it\sqrt{-\Delta}}f\right\|_{L^p(B_R^{n+1})}
\lesssim_{\varepsilon}
R^{(n-1)\left(\frac12-\frac1p\right)+\varepsilon}
\|f\|_2^{\frac{2}{p}}
\mathcal{W}(f,B_R^{n+1})^{1-\frac{2}{p}}
\]

cannot hold uniformly for all $f$ when $p<p_n^{+}$.
\medskip
\subsection{Broad Reduction}\label{subsec:broadnorm}
In this section, we recall the broad--narrow reduction \cite{BourgainGuth}., also known as the Bourgain--Guth method. 
\[
K=R^{\varepsilon^{50}}.
\]
We say that a $K^2$-ball is $k$-narrow if the wave packets intersecting it are concentrated near a $(k-1)$-dimensional subspace. Otherwise, we say that it is $k$-broad. Let $X$ be the union of all $k$-broad $K^2$-balls $B_{K^2}^{n+1}\subset B_R^{n+1}$.

We now give the formal definition of the broad norm. Fix a sufficiently large integer $A$. and
$1\leq k\leq n+1$. For a $K^2$-ball
$B_{K^2}^{n+1}\subset B_{\lambda}^{n+1}$ centered at $z_0$, define
\[
\mu_{\mathcal{F}^{\lambda}f}(B_{K^2}^{n+1})
:=
\min_{V_1,\ldots,V_A}
\max_{\tau\notin V_i}
\int_{B_{K^2}^{n+1}}
\left|\mathcal{F}^{\lambda}f_{\tau}\right|^p.
\]
Here, $V_1,\ldots,V_A$ range over all $(k-1)$-dimensional subspaces, and
$\tau\notin V_i$ refers to those $\tau\in\Theta_{K^{-1}}$ satisfying
\[
\angle\bigl(G^{\lambda}(z_0;\tau),V_i\bigr)>K^{-2}
\]
for all $1\leq i\leq A$. If $U\subset B_{\lambda}^{n+1}$ is a disjoint union
of $K^2$-balls, we define the broad norm by
\[
\left\|\mathcal{F}^{\lambda}f\right\|_{BL_{k,A}^p(U)}^p
:=
\sum_{B_{K^2}^{n+1}\subset U}
\mu_{\mathcal{F}^{\lambda}f}(B_{K^2}^{n+1}).
\]
By standard rescaling arguments, if, for every
$f\in\mathcal{S}(\mathbb{R}^n)$ satisfying \eqref{supp-f-hat}, one has
\[
    \|\mathcal{F}^{\lambda}f\|_{BL_{k,A}^{p}(X)}
    \lesssim_{\varepsilon}
    R^{(n-1)\left(\frac12-\frac1p\right)+\varepsilon}
    \|f\|_{2}^{\frac{2}{p}}
    \mathcal{W}(f,B_{R}^{n+1})^{1-\frac{2}{p}},
\]
then the linear estimate \eqref{wave-packet-density-estimate} follows provided that
\begin{equation} \label{broad2linearthreshold}
    p \geq 2 + \frac{4}{2n-\min\{3,k\}+1}.
\end{equation}
\medskip

\subsection{A multilinear geometric estimate}\label{subsec:multilinear-estimates}
We introduce a multilinear geometric estimate that will be used to improve the incidence estimate. This estimate generalizes Lemma 5.8 of \cite{GanHeLiWu}.
\begin{lemma} \label{multilinear-estimates}
Let $3\leq k\leq n+1$, and let
\[
\vec{v}_i=\partial_z\phi^\lambda(z;\xi_i),
\qquad 1\leq i\leq k-1.
\]
Suppose that
\begin{equation} \label{linear-independence}
    \left|\vec{v}_1\wedge\cdots\wedge\vec{v}_{k-1}\right|\gtrsim K^{-O(1)}.
\end{equation}
Then
\[
\left|\bigcap_{i=1}^{k-1} T_i\right|
\lesssim
K^{O(1)}R^{\frac{n-k+2}{2}}.
\]
\end{lemma}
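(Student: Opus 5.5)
The plan is to show that the intersection $\bigcap_{i=1}^{k-1} T_i$ sits inside a ball of radius $K^{O(1)}R^{1/2}$. Inside such a ball, each plank looks like a flat slab of thickness $O(1)$ with normal $\vec v_i$. The volume bound then reduces to linear algebra. Recall the geometry of a scale-$R$ wave packet plank $T\in\mathbb{T}_\theta(z_0)$ for $\theta\in\Theta_{R^{-1/2}}$ with centre frequency $\xi$. It is a curved plank of dimensions about $R$ along the bicharacteristic (light-ray) curve $\gamma_{T}$, $R^{1/2}$ in the remaining $n-1$ tangential directions, and $1$ in the direction of $\partial_z\phi^\lambda(z;\xi)$. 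In particular $T$ is contained in the $O(R^{1/2})$-neighbourhood of $\gamma_T$, and also in an $O(1)$-neighbourhood of a level-set piece of $z\mapsto\phi^\lambda(z;\xi)$.

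\emph{Step 1 (localisation).} Hypothesis \eqref{linear-independence} with $k\ge 3$ implies in particular $|\vec v_1\wedge\vec v_2|\gtrsim K^{-O(1)}$. Because $\partial_z\phi^\lambda(z;\cdot)$ is homogeneous of degree one and, by the normalisation of \cite[Sections~3.1--3.2]{GanHeLiWu}, is a local diffeomorphism from directions onto the cone of normals, this gives an angular separation $\angle(\xi_1,\xi_2)\gtrsim K^{-O(1)}$. By the cinematic curvature condition, the directions $G^\lambda(z;\xi_1)$ and $G^\lambda(z;\xi_2)$ of the core curves $\gamma_{T_1},\gamma_{T_2}$ are then also $K^{-O(1)}$-separated, uniformly in $z$, since these directions vary only by $O(R/\lambda)$ over $B_R$. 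Two such curves have transversal directions, and their $R^{1/2}$-neighbourhoods meet in a set of diameter $\lesssim K^{O(1)}R^{1/2}$. Hence $\bigcap_i T_i\subset B(z_*,CK^{O(1)}R^{1/2})$ for some $z_*$. This is the same mechanism as in Lemma 5.8 of \cite{GanHeLiWu}. I expect this to be the main technical point: one has to check carefully, using the variable-coefficient normalisation and $R\le\lambda^{1-\varepsilon}$, that the normal separation really transfers to a separation of the ray directions.

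\emph{Step 2 (flattening).} On the ball $B_*:=B(z_*,K^{O(1)}R^{1/2})$, Taylor-expand $\phi^\lambda(z;\xi_i)$ around $z_*$. The Hessian of $\phi^\lambda$ is $O(\lambda^{-1})$, so the second-order error is $O(K^{O(1)}R/\lambda)=O(1)$. Likewise $\partial_z\phi^\lambda(z;\xi_i)$ differs from $\vec v_i=\partial_z\phi^\lambda(z_*;\xi_i)$ by $O(K^{O(1)}R^{1/2}/\lambda)$. Therefore $T_i\cap B_*$ is contained in the flat slab
\[
S_i:=\{z:\ |\langle z-z_*,\vec v_i\rangle - c_i|\lesssim 1\}
\]
for a suitable constant $c_i$.

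\emph{Step 3 (volume).} It remains to bound $|B_*\cap S_1\cap\cdots\cap S_{k-1}|$. Choose coordinates in which $\mathrm{span}(\vec v_1,\dots,\vec v_{k-1})$ is the first $k-1$ coordinates. The slab conditions confine the projection onto this span to a parallelepiped of volume $\lesssim|\vec v_1\wedge\cdots\wedge\vec v_{k-1}|^{-1}\lesssim K^{O(1)}$, by \eqref{linear-independence}. The orthogonal $(n+2-k)$-dimensional component ranges over a ball of radius $K^{O(1)}R^{1/2}$. Fubini then gives
\[
\Bigl|\bigcap_{i=1}^{k-1}T_i\Bigr|\lesssim K^{O(1)}\bigl(R^{1/2}\bigr)^{n+2-k}=K^{O(1)}R^{\frac{n-k+2}{2}},
\]
which is the claim.
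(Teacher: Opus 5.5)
Your argument is correct. It differs from the paper's in the first two steps; Step 3 is the paper's linear map $L$ written as a Fubini argument.

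\textbf{Step 1 (localization).} The paper does not localize at all. It declares ``without loss of generality'' that all $T_i$ are centred at a common point and then studies only $T_i\cap B^{n+1}_{R^{1/2}}$. So what it actually proves is the bound for $\bigl|B^{n+1}_{R^{1/2}}\cap\bigcap_i T_i\bigr|$. That is all Lemma~\ref{sigma-new-bound} uses, since there one integrates over a single $R^{1/2}$-cube $Q$.

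Your Step 1 localizes the whole intersection to a $K^{O(1)}R^{1/2}$-ball, so you get the lemma exactly as stated. The price is an appeal to cinematic curvature, i.e.\ injectivity of $\xi\mapsto G^\lambda(z;\xi)$ on directions under the normalization of \cite{GanHeLiWu}. You also need $|\partial_z G^\lambda|\lesssim\lambda^{-1}$ and $R\le\lambda^{1-\varepsilon}$ to keep the rays transverse over their full length.

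One small correction: passing from $|\vec v_1\wedge\vec v_2|\gtrsim K^{-O(1)}$ to $\angle(\xi_1,\xi_2)\gtrsim K^{-O(1)}$ needs only homogeneity and a Lipschitz bound, not a diffeomorphism property. Curvature is needed only for the next implication, from separated $\xi_i$ to separated $G^\lambda$.

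\textbf{Step 2 (slab structure).} The paper linearizes $\gamma^\lambda(u,t;\xi)$ and writes $T\cap B^{n+1}_{R^{1/2}}$ as an affine image of $T^\flat\times[-R^{1/2},R^{1/2}]$. It then finds the normal through $[\partial_u\gamma^\lambda]^{-T}=(\partial_x\partial_\xi\phi^\lambda)^T$ and Euler's identity. You instead Taylor-expand the phase. This gives the normal $\vec v_i$ immediately, and the one-sided containment $T_i\cap B_*\subset S_i$ is all an upper bound requires.

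You should make explicit why $T$ sits in a thin sublevel slab of $\phi^\lambda(\cdot;\xi)$. Since $T=\{z:\partial_\xi\phi^\lambda(z;\xi)\in T^\flat\}$ and $T^\flat$ is thin in the direction $\xi$, the identity $\langle\partial_\xi\phi^\lambda(z;\xi),\xi\rangle=\phi^\lambda(z;\xi)$ turns the thin-side constraint into the statement that $\phi^\lambda(z;\xi)$ ranges over an interval of length $O(R^{2\delta})$. This is the same Euler identity the paper uses at the end of its computation. The resulting thickness is $R^{2\delta}$ rather than $O(1)$, which is harmless once $R^{O(\delta)}$ is absorbed into $K^{O(1)}$, as the paper does.
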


\begin{proof}
We follow the proof of \cite[Lemma 5.8]{GanHeLiWu}.
Without loss of generality, we may assume that, for each
$1\leq i\leq k-1$, the plank $T_i$ is centered at
$(u,t)=(0,0)$. Thus, the core curve of each $T_i$ passes through
$(0,0)$.

We first show that, for an $R$-plank $T$ with direction $\xi$,
the set
\[
T\cap B_{R^{1/2}}^{n+1}
\]
is comparable to a slab of dimensions
\[
R^{2\delta}\times R^{1/2}\times\cdots\times R^{1/2}.
\]
Write
\[
\begin{split}
\gamma^\lambda(u,t;\xi)
={}&
\gamma^\lambda(u,t;\xi)-\gamma^\lambda(0,t;\xi)\\
&+\gamma^\lambda(0,t;\xi)-\gamma^\lambda(0,0;\xi)
+\gamma^\lambda(0,0;\xi).
\end{split}
\]
As in the proof of \cite[Lemma 5.8]{GanHeLiWu}, since
$u\in T^\flat$ and $R\leq\lambda^{1-\varepsilon}$, we have
\[
\gamma^\lambda(u,t;\xi)-\gamma^\lambda(0,t;\xi)
=
\partial_u\gamma^\lambda(0,t;\xi)\cdot u+O(1).
\]
Moreover,
\[
\partial_u\gamma^\lambda(0,t;\xi)
-
\partial_u\gamma^\lambda(0,0;\xi)
=
O(\lambda^{-1}t)
=
O(R^{-1/2})
\]
whenever $|t|\leq R^{1/2}$. Consequently,
\[
\gamma^\lambda(u,t;\xi)-\gamma^\lambda(0,t;\xi)
=
\partial_u\gamma^\lambda(0,0;\xi)\cdot u+O(1).
\]
Similarly, since
\[
\|\partial_t^2\gamma^\lambda\|_\infty
\lesssim \lambda^{-1},
\]
we have
\[
\gamma^\lambda(0,t;\xi)-\gamma^\lambda(0,0;\xi)
=
\partial_t\gamma^\lambda(0,0;\xi)t+O(1).
\]
Therefore,
\[
T\cap B_{R^{1/2}}^{n+1}
\]
is comparable to the set
\[
\left\{
\left(
\partial_t\gamma^\lambda(0,0;\xi)t
+
\partial_u\gamma^\lambda(0,0;\xi)\cdot x,
t
\right):
x\in T^\flat,\ |t|\leq R^{1/2}
\right\},
\]
which is a slab in $\mathbb{R}^{n+1}$ of dimensions
\[
R^{2\delta}\times R^{1/2}\times\cdots\times R^{1/2}.
\]

We next determine a normal vector to this slab. Since $\xi$ is normal
to $T^\flat$ in $\mathbb{R}^n$, a normal vector is of the form
\[
v=
\left(
[\partial_u\gamma^\lambda(0,0;\xi)]^{-T}\xi,
0
\right)
+a e_{n+1}.
\]
Since $v$ is orthogonal to the $t$-direction of the slab, we have
\[
a
=
-
\left\langle
[\partial_u\gamma^\lambda(0,0;\xi)]^{-T}\xi,
\partial_t\gamma^\lambda(0,0;\xi)
\right\rangle.
\]
Taking partial derivatives of the defining relation for
$\gamma^\lambda$ with respect to $u$ and $t$, respectively, gives
\[
\partial_u\gamma^\lambda
=
(\partial_x\partial_\xi\phi^\lambda)^{-1},
\qquad
\partial_t\gamma^\lambda
=
-
(\partial_x\partial_\xi\phi^\lambda)^{-1}
\partial_t\partial_\xi\phi^\lambda.
\]
Hence
\[
[\partial_u\gamma^\lambda]^{-T}
=
(\partial_x\partial_\xi\phi^\lambda)^T,
\]
and therefore
\[
v
=
\left(
(\partial_x\partial_\xi\phi^\lambda)^T(0,0;\xi)\xi,
0
\right)
+
\left\langle
\xi,
\partial_t\partial_\xi\phi^\lambda(0,0;\xi)
\right\rangle e_{n+1}.
\]
Since $\phi^\lambda$ is $1$-homogeneous in $\xi$, Euler's identity
gives
\[
\partial_z\phi^\lambda
=
\partial_\xi\partial_z\phi^\lambda\cdot\xi.
\]
Thus
\[
v=\partial_z\phi^\lambda(0,0;\xi).
\]

Applying the above argument to $T_1,\ldots,T_{k-1}$, we see that
each
\[
T_i\cap B_{R^{1/2}}^{n+1}
\]
is comparable to a slab $S_i$ of dimensions
\[
R^{2\delta}\times R^{1/2}\times\cdots\times R^{1/2},
\]
whose normal vector is $\vec v_i$.

Let
\[
V=\operatorname{span}
\{\vec v_1,\ldots,\vec v_{k-1}\}.
\]
By the transversality assumption,
\[
\dim V=k-1.
\]
Choose an orthonormal basis
\[
e_k,\ldots,e_{n+1}
\]
of $V^\perp$. Consider the linear map
\[
L:\mathbb{R}^{n+1}\to\mathbb{R}^{n+1}
\]
defined by
\[
L(z)
=
\big(
\langle z,\vec v_1\rangle,\ldots,
\langle z,\vec v_{k-1}\rangle,
\langle z,e_k\rangle,\ldots,
\langle z,e_{n+1}\rangle
\big).
\]
Its Jacobian satisfies
\[
|\det L|
\sim
\left|
\vec v_1\wedge\cdots\wedge\vec v_{k-1}
\right|
\gtrsim K^{-O(1)}.
\]

Under $L$, the intersection
\[
\bigcap_{i=1}^{k-1}S_i
\]
is contained in a rectangular box with $k-1$ sides of length
$O(R^{2\delta})$ and
\[
n+1-(k-1)=n-k+2
\]
sides of length $O(R^{1/2})$. It follows that
\[
\begin{split}
\left|\bigcap_{i=1}^{k-1}S_i\right|
&\lesssim
K^{O(1)}
R^{2(k-1)\delta}
\left(R^{1/2}\right)^{n-k+2}\\
&=
K^{O(1)}
R^{2(k-1)\delta+\frac{n-k+2}{2}}.
\end{split}
\]
Absorbing the $R^{O(\delta)}$ factor into $K^{O(1)}$, we obtain
\[
\left|\bigcap_{i=1}^{k-1}S_i\right|
\lesssim
K^{O(1)}
R^{\frac{n-k+2}{2}}.
\]
Therefore,
\[
\left|\bigcap_{i=1}^{k-1}T_i\right|
\lesssim
K^{O(1)}
R^{\frac{n-k+2}{2}},
\]
as desired.
\end{proof}
\bigskip

\section{Refinements}\label{sec:refinement}
In this section, we present a refinement of the work of Gan, He, Li, and Wu. Throughout this section, we assume that $n\geq 4$ is even and set
\[
k=\frac{n+4}{2}.
\]
We further assume that $f\in\mathcal{S}(\mathbb{R}^n)$ satisfies \eqref{supp-f-hat} and is concentrated on a collection of wave packets $\mathbb{T}$ such that
\[
\|f_T\|_2^2 \sim R^{\frac{n-1}{2}}
\]
for every $T\in\mathbb{T}$. Finally, we set $p = p'(n)$. 
\subsection{A Refined Two-Ends Reduction}\label{sec:two-end-reduction}
In this subsection, we refine the two-ends reduction introduced in \cite{GanHeLiWu}. 

Cover $[-R,R]$ by a finitely overlapping family of intervals $I$ of
length $\sim R/\Kc$, and let $\Om=\{\omega\}$ denote the associated
collection of horizontal regions, where each $\omega$ is of the form
\[
\mathbb{R}^n\times I.
\]
To formalize the two-ends and broad structures, we recall the following
notions from \cite[Section 6]{GanHeLiWu}.

\begin{definition}[Shaded incidence triple]
\cite[Definition 6.1]{GanHeLiWu}
Let $\Om$ be the collection of horizontal regions defined above.
A \textbf{shaded incidence triple}, or simply a \textbf{triple},
$(\bB,\bT;\cG)$ consists of the following:
\begin{enumerate}
    \item $\bB=\{B\}$ is a collection of $K^2$-balls intersecting
    $B_R^{n+1}$;
    \item $\bT=\{T\}$ is a collection of $R$-planks in $B_R^{n+1}$;
    \item $\cG:\bT\to P(\Om)$ is a \textbf{shading map}, where
    $P(X)$ denotes the power set of $X$.
\end{enumerate}
\end{definition}

Given two shading maps $\cG_1$ and $\cG_2$, we write
$\cG_1\subset\cG_2$ if
\[
\cG_1(T)\subset\cG_2(T)
\]
for every $T\in\bT$.

Given a triple $(\bB,\bT;\cG)$, we define the incidence count
\[
\mathcal{I}(\bB,\bT;\cG)
:=
\#\Bigl\{
(B,T)\in\bB\times\bT:
B\cap T\neq\emptyset,\;
B\subset\bigcup\cG(T)
\Bigr\}.
\]
For each $B\in\bB$, define
\[
\bT(B;\cG)
:=
\Bigl\{
T\in\bT:
B\cap T\neq\emptyset,\;
B\subset\bigcup\cG(T)
\Bigr\},
\]
and for each $T\in\bT$, define
\[
\bB(T;\cG)
:=
\Bigl\{
B\in\bB:
B\cap T\neq\emptyset,\;
B\subset\bigcup\cG(T)
\Bigr\}.
\]

\begin{definition}
\cite[Definition 7.2]{GanHeLiWu}
Given a triple $(\bB,\bT;\cG)$, we define
\[
L^p(\bB,\bT;\cG;f)
:=
\sum_{B\in\bB}
\left\|
\sum_{T\in\bT(B;\cG)}
\mathcal{F}^\la f_T
\right\|_{L^p(B)}^p.
\]
\end{definition}

\begin{definition}[Broad incidence]\label{defbbr}
\cite[Definition 6.3]{GanHeLiWu}
Given a triple $(\bB,\bT;\cG)$, define
\[
\cI_{\bbr}(\bB,\bT;\cG)
:=
\inf_{\tau\in\Theta_{K^{-1}}}
\cI\bigl(
\bB,
\{T\in\bT:\theta(T)\notin\tau\};
\cG
\bigr).
\]
\end{definition}

We next introduce a refined version of the broad cardinality from
\cite{GanHeLiWu}. Fix a sufficiently large integer $A$. For a collection of planks $\bT$, we define the broad subcollection
$\bT_{\bbr,j}\subset\bT$ as follows.

If $j=2$, choose $\tau_0\in\Theta_{K^{-1}}$ such that
\[
\#\bigl\{T\in\bT:\theta(T)\notin\tau_0\bigr\}
=
\min_{\tau\in\Theta_{K^{-1}}}
\#\bigl\{T\in\bT:\theta(T)\notin\tau\bigr\},
\]
and set
\[
\bT_{\bbr,2}
:=
\bigl\{T\in\bT:\theta(T)\notin\tau_0\bigr\}.
\]

If $j\geq3$, choose $(j-1)$-dimensional subspaces
$V_1,\ldots,V_A$ such that
\[
\max_{\substack{
\tau\in\Theta_{K^{-1}}\\
\tau\notin V_i,\ 1\leq i\leq A
}}
\#\bigl\{T\in\bT:\theta(T)\in\tau\bigr\}
\]
is minimized, and then choose $\tau_0\in\Theta_{K^{-1}}$ satisfying
$\tau_0\notin V_i$ for all $1\leq i\leq A$ such that
\[
\#\bigl\{T\in\bT:\theta(T)\in\tau_0\bigr\}
=
\max_{\substack{
\tau\in\Theta_{K^{-1}}\\
\tau\notin V_i,\ 1\leq i\leq A
}}
\#\bigl\{T\in\bT:\theta(T)\in\tau\bigr\}.
\]
We then set
\[
\bT_{\bbr,j}
:=
\bigl\{T\in\bT:\theta(T)\in\tau_0\bigr\}.
\]

Accordingly,
\[
\#_{\bbr,j}\bT:=\#\bT_{\bbr,j}.
\]

\subsubsection{Algorithm 1}

We now describe Algorithm 1, which provides a procedure for refining a given triple $(\bB,\bT;\cG)$, where $\bT$ denotes the collection of wave packets and $\bB$ denotes the integration domain. The output consists of a refined triple together with several auxiliary parameters.

\medskip

\noindent\textit{Input:}
Let $(\bB,\bT;\cG)$ be an incidence triple, and suppose that there exists $\nu>0$ such that, for every $B\in\bB$,
\[
\big\|\sum_{T\in\bT(B,\cG)}\mathcal{F}^\la f_T\big\|_{L^p(B)}\sim \nu.
\]

\medskip

\noindent\textit{Pigeonholing and pruning:}
By dyadic pigeonholing, there exist $\mu\lesssim R^{O(1)}$ and a subcollection $\bB_1\subset\bB$ such that
\[
    \#_{\bbr,k}\bT(B;\cG)\sim\mu
    \qquad\textup{for all $B\in\bB_1$},
\]
and
\[
    L^p(\bB,\bT;\cG;f)
    \lessapprox
    L^p(\bB_1,\bT;\cG;f).
\]

For dyadic numbers $W\lesssim R^{O(1)}$ and $\beta\lesssim\Kc$, define
\[
    \cG_W(T)
    :=
    \left\{
    \om\in\cG(T):
    \#\{B\in\bB_1(T;\cG):B\subset\om\}\sim W
    \right\},
\]
and
\[
    \bT_{W,\beta}
    :=
    \{T\in\bT:\#\cG_W(T)\sim\beta\}.
\]

Fix $B\in\bB_1$. By dyadic pigeonholing, there exist $W(B)$ and $\beta(B)$ such that
\[
\nu
\sim
\big\|\sum_{T\in\bT(B;\cG)}\mathcal{F}^\la f_T\big\|_{L^p(B)}
\lessapprox
\big\|\sum_{T\in\bT_{W(B),\beta(B)}(B;\cG_{W(B)})}
\mathcal{F}^\la f_T\big\|_{L^p(B)}.
\]
Applying dyadic pigeonholing once more, we may find dyadic numbers $W$, $\beta$, and $\nu_1$, together with a subcollection $\bB_2\subset\bB_1$, such that
\[
W(B)\sim W,\qquad
\beta(B)\sim\beta,\qquad
\big\|\sum_{T\in\bT_{W,\beta}(B;\cG_W)}
\mathcal{F}^\la f_T\big\|_{L^p(B)}
\sim\nu_1
\qquad\text{for all $B\in\bB_2$},
\]
and
\[
    L^p(\bB_1,\bT;\cG;f)
    \lessapprox
    L^p(\bB_2,\bT_{W,\beta};\cG_W;f).
\]

Finally, cover $B_R^{n+1}$ by a finitely overlapping family of $R^{1/2}$-cubes. By dyadic pigeonholing, there exist dyadic numbers $m$, $\bar m$, and $\sigma$, together with a collection $\cQ$ of $R^{1/2}$-cubes, such that, for every $Q\in\cQ$,
\[
\#_{\bbr,2}
\bigcup_{\substack{B\in\bB_2\\B\cap Q\neq\emptyset}}
\bT_{W,\be}(B;\cG_W)
\sim m,
\]
\[
\#_{\bbr,2}
\bigcup_{\substack{B\in\bB_2\\B\cap Q\neq\emptyset}}
\bT(B;\cG)
\sim\bar m,
\]
and
\[
\left|
Q\cap\bigcup\mathbf{B}_2
\right|
\sim\sigma.
\]
Moreover,
\[
    L^p(\bB_2,\bT_{W,\beta};\cG_W;f)
    \lessapprox
    L^p(\bB_3,\bT_{W,\beta};\cG_W;f),
\]
where
\[
\bB_3
:=
\left\{
B\in\bB_2:
B\cap\left(\bigcup_{Q\in\cQ}Q\right)\neq\emptyset
\right\}.
\]

Finally, define
\[
l
:=
\sup_{T\in\bT_{W,\be}}
\#\{Q\in\cQ:Q\cap T\neq\emptyset\}.
\]
We now conclude Algorithm 1 and record several properties of the quantities introduced above. By construction, we trivially have
\begin{equation}\label{m-mu}
    \mu \lesssim \bar{m},
\qquad
m \lesssim \bar{m},
\end{equation}
as well as
\begin{equation}\label{incidence-0}
\mathcal{I}(\bB_1,\bT;\cG)
\gtrsim
\#\bB_1\,\mu,
\end{equation}
and
\begin{equation}\label{incidence-1}
\mathcal{I}(\bB_1,\bT_{W,\beta};\cG_W)
\lesssim
\#\bT_{W,\beta}\,\be W.
\end{equation}

Using the hairbrush argument, we also have the following estimate.

\begin{lemma}\cite[Lemma 6.5]{GanHeLiWu}\label{hairbrush}
Suppose that $\beta>100$. Then
\[
\#\bB_1
\gtrsim
(K\Kc)^{-O(1)}Wlm.
\]
\end{lemma}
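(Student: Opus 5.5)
We prove this with a hairbrush argument, as in \cite[Lemma 6.5]{GanHeLiWu}: a single broad cube $Q\in\cQ$ generates many balls in $\bB_1$ along the planks through it.

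\textbf{Step 1 (choose a stem).} Fix a cube $Q\in\cQ$. By the definition of $m$, the broad part of $\bigcup_{B\in\bB_2,\,B\cap Q\neq\emptyset}\bT_{W,\be}(B;\cG_W)$ has cardinality $\sim m$. This yields a family $\bT_Q$ of $\sim m$ planks through (the $R^{1/2}$-neighborhood of) $Q$. Their directions avoid a single cap $\tau_0\in\Theta_{K^{-1}}$.

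\textbf{Step 2 (two-ends along each plank).} Each $T\in\bT_Q$ lies in $\bT_{W,\be}$, so it carries $\sim\be>100$ horizontal regions $\om\in\cG_W(T)$. Each such region contains $\sim W$ balls of $\bB_1(T;\cG)$. The regions have length $R/\Kc$ and are finitely overlapping, and $\be>100$. Hence we can discard the $O(1)$ regions within distance $\lesssim R/\Kc$ of $Q$ in the $t$-variable and still keep $\gtrsim\be$ regions. Every remaining ball $B\subset T$ is then at distance $\gtrsim R/\Kc$ from $Q$. Each plank therefore contributes $\gtrsim\be W\gtrsim W$ balls far from $Q$.

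\textbf{Step 3 (bounded overlap of the bristles).} We need to show that a fixed $K^2$-ball $B$ at distance $\gtrsim R/\Kc$ from $Q$ lies in at most $(K\Kc)^{O(1)}$ planks of the hairbrush. The planks through $Q$ and through $B$ have direction pinned down, via the curve $\gamma^\la$, to within $\sim (K\Kc)^{O(1)}R^{-1/2}$ in angle. This gives $O((K\Kc)^{O(1)})$ caps $\theta$. Within each cap, the translates through a common $R^{1/2}$-cube number $O(1)$.

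This counting is the main obstacle. For variable-coefficient phases it needs the transversality/Lorentz-geometry estimate of \cite{GanHeLiWu}, namely that two planks through $Q$ with angle $\gg \Kc R^{-1/2}$ separate at distance $R/\Kc$. The broadness (directions avoiding $\tau_0$) keeps us away from degenerate directions, which lets us apply it. In that estimate we track the losses $K^{O(1)}$ from the $K^{-1}$-caps and $\Kc^{O(1)}$ from the separation scale.

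\textbf{Step 4 (count and use $l$).} Summing over $\bT_Q$ gives
\[
\#\bB_1\gtrsim (K\Kc)^{-O(1)}\,m\,W
\]
from a single cube. To gain the factor $l$, we choose a plank $T_0\in\bT_{W,\be}$ attaining $\#\{Q\in\cQ:Q\cap T_0\neq\emptyset\}\sim l$. We then select $\gtrsim l/\Kc^{O(1)}$ of those cubes with pairwise separation $\gtrsim R/\Kc$ along $T_0$, and run Steps 1--3 at each of them. Planks from brushes at separated cubes are mostly distinct from each other and from $T_0$. Hence a ball far from all chosen cubes lies in $(K\Kc)^{O(1)}$ bristles in total, by the same separation estimate applied across different cubes. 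Summing the contributions yields
\[
\#\bB_1\gtrsim (K\Kc)^{-O(1)}Wlm.
\]
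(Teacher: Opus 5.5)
There is a genuine gap in Step 4. The rest of your skeleton is the right hairbrush: a stem $T_0$ attaining $l$, $\gtrsim m$ broad bristles at each cube on it, $\be>100$ to find $\gtrsim W$ balls at distance $\gtrsim R/\Kc$ from the cube, and then a bounded-overlap count. But Step 4 does not produce the factor $l$. The stem $T_0$ has length $\sim R$, so it contains at most $O(\Kc)$ cubes with pairwise separation $\gtrsim R/\Kc$. Choosing $\gtrsim l/\Kc^{O(1)}$ of them is impossible once $l\gg\Kc^{O(1)}$, and $l$ can be as large as $R^{1/2}$; that is exactly the regime the paper later uses through $l\lesssim R^{1/2}$. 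As written, your argument therefore only gives $\#\bB_1\gtrsim (K\Kc)^{-O(1)}Wm$.

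To get $Wlm$ you must keep all $\sim l$ cubes on the stem. Then you need a cross-cube non-overlap statement: a ball $B$ lying at distance $\gtrsim R/\Kc$ from its cube, on a bristle making angle $\gtrsim K^{-1}$ with $T_0$, can lie on bristles from only $(K\Kc)^{O(1)}$ cubes of the stem. Your single-cube separation estimate does not imply this. The analogous statement is false for ordinary tube hairbrushes, where bristles through different points of the stem lying in a common $2$-plane meet far from the stem.

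What makes it true here is null geometry: both the stem and the bristles are light rays. In the Euclidean model with stem core $\{(x_0-s\omega_0,s)\}$, a point $z=(x,t)$ lies on the light cone of the stem point with parameter $s$ iff $|x-x_0|^2-t^2+2s\bigl((x-x_0)\cdot\omega_0+t\bigr)=0$. This equation is linear in $s$. If $z$ lies on a bristle with direction $\omega$ issued from parameter $s$, the coefficient equals $(t-s)(1-\omega\cdot\omega_0)$, which has size $\gtrsim (K\Kc)^{-O(1)}R$. Hence the $R^{1/2}$-uncertainties in $z$ and in the stem's cross-section pin $s$ down to within $(K\Kc)^{O(1)}R^{1/2}$, that is, to $(K\Kc)^{O(1)}$ cubes. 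After that your direction count applies. In the variable-coefficient setting you need the corresponding consequence of the Lorentz (cinematic curvature) structure.

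Two smaller repairs are also needed.
\begin{itemize}
\item In Step 3, the claim that translates in a fixed cap through a common $R^{1/2}$-cube number $O(1)$ is false. The planks are only $R^{O(\delta)}$ thick in their short direction, so $\sim R^{1/2-O(\delta)}$ parallel planks cross one cube. Count translates through the $K^2$-ball $B$ instead, which gives $K^{O(1)}$.
\item In Step 1, the bristles must avoid the $K^{-1}$-cap containing $\theta(T_0)$, not merely the minimizing cap $\tau_0$. This is available because $\#_{\bbr,2}$ is a minimum over all caps, but it requires fixing $T_0$ before choosing the bristles.
\end{itemize}
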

We now apply the multilinear geometric estimate to obtain an improved upper bound for $\sigma$.
\begin{lemma}\label{sigma-new-bound}
We have
\[
\sigma \lesssim K^{O(1)} R^\frac{n+2-k}{2}\bar{m}^{k-1} \mu^{1-k}
\]
\end{lemma}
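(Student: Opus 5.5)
Fix $Q\in\cQ$. The plan is to bound $\sigma=|Q\cap\bigcup\bB_2|$ by a multilinear double-counting argument. Each $K^2$-ball $B\in\bB_2$ meeting $Q$ sees at least $\mu$ broad planks in $\bT(B;\cG)$ in the $k$-broad sense. All these planks lie in the collection
\[
\bT_Q:=\bigcup_{B\in\bB_2,\,B\cap Q\neq\emptyset}\bT(B;\cG).
\]
We then count $(k-1)$-tuples of transverse planks through each ball, and compare with the volume of their common intersection given by Lemma~\ref{multilinear-estimates}.

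\textbf{Step 1: transverse tuples at each ball.} Fix $B\in\bB_2$ with $B\cap Q\neq\emptyset$. By the definition of $\#_{\bbr,k}$ with $\#_{\bbr,k}\bT(B;\cG)\sim\mu$, the planks through $B$ cannot be concentrated in $A$ many $(k-1)$-dimensional subspaces. The planned greedy argument is as follows. Pick $T_1$ in a cap carrying $\gtrsim\mu$ planks. Having chosen $T_1,\dots,T_{j}$ with $j\le k-2$, the span of the normals $\vec v_i=\partial_z\phi^\lambda(z_B;\xi_i)$ is a $j$-dimensional subspace, hence contained in some $(k-1)$-dimensional subspace. The broadness condition then produces a cap $\tau\in\Theta_{K^{-1}}$, transverse to it at angle $\gtrsim K^{-2}$, with $\gtrsim\mu$ planks; pick $T_{j+1}$ there. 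This yields $\gtrsim K^{-O(1)}\mu^{k-1}$ ordered $(k-1)$-tuples $(T_1,\dots,T_{k-1})$ through $B$ with $|\vec v_1\wedge\cdots\wedge\vec v_{k-1}|\gtrsim K^{-O(1)}$. Since the $\vec v_i$ vary by $O(R^{-1/2})$ across $Q$, the same transversality holds uniformly in $Q$.

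\textbf{Step 2: upper bound for the number of tuples.} The number of $(k-1)$-tuples of planks from $\bT_Q$ is at most $(\#\bT_Q)^{k-1}$. We want the bound $\bar m^{k-1}$ instead. For this we use $\#_{\bbr,2}\bT_Q\sim\bar m$. In the greedy construction of Step 1 we may discard the planks in the single exceptional cap $\tau_0$ defining $\#_{\bbr,2}$, since a broad family at scale $K^{-1}$ loses at most a $K^{-O(1)}$ fraction this way. All chosen planks then lie in $(\bT_Q)_{\bbr,2}$, so the number of tuples is at most $\bar m^{k-1}$.

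\textbf{Step 3: double counting.} We integrate over $Q\cap\bigcup\bB_2$:
\[
K^{-O(1)}\mu^{k-1}\sigma\lesssim\sum_{(T_1,\dots,T_{k-1})\ \text{transverse}}\Bigl|Q\cap\bigcap_i T_i\Bigr|\lesssim \bar m^{k-1}\,K^{O(1)}R^{\frac{n-k+2}{2}}.
\]
The last inequality is Lemma~\ref{multilinear-estimates}. Rearranging gives the claim. The $R^{2\delta}$ thickening of the planks, and the $K^2$-enlargement so that each point of $B$ lies in each $T_i$, are absorbed into $K^{O(1)}$.

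\textbf{Main obstacle.} The delicate point is Step 1 together with Step 2. We need the $k$-broadness, which is phrased via caps not lying in $A$ subspaces, to genuinely produce $\mu^{k-1}$ tuples with wedge $\gtrsim K^{-O(1)}$. We also need all of them counted inside the $2$-broad family of size $\bar m$. I would handle this by choosing $A$ large and noting that the minimax definition of $\#_{\bbr,k}$ guarantees $\gtrsim\mu$ planks in caps outside any prescribed $A$ subspaces. One of those subspaces can be taken to contain $\tau_0$'s direction, which is the first thing I would try.
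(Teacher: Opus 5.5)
Your proposal is correct and matches the paper's argument. Broadness gives $k-1$ transverse caps, each carrying $\gtrsim\mu$ planks and each avoiding the exceptional cap $\tau_0$ of the $2$-broad family; you then double count over $Q\cap\bigcup\bB_2$, bound each intersection by Lemma~\ref{multilinear-estimates}, and bound the number of tuples by $\bar m^{k-1}$. The one repair needed is in Step~2: the claim that removing $\tau_0$ ``loses at most a $K^{-O(1)}$ fraction'' is not the right justification. What works is your closing idea, namely placing the direction of $\tau_0$ inside one of the $A$ forbidden $(k-1)$-dimensional subspaces at each greedy step. Equivalently, as the paper does, you can extract $k$ transverse caps and observe that at most one of them equals $\tau_0$.
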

\begin{proof}
Fix $Q \in \mathcal{Q}$. Let
\[
\mathcal{T}_0
:=
\bigcup_{\substack{B\in\bB_3,B \subset Q}}
\bT(B;\cG),
\qquad
\mathcal{T}
:=
(\mathcal{T}_0)_{\bbr,2}.
\]
We claim that, for every
$B=B_{K^2}^{n+1}(z)\in\bB_3$, there exist $K^{-1}$-caps
$\tau_1,\ldots,\tau_{k-1}$ satisfying \eqref{linear-independence}
such that
\[
\#\mathcal{T}_{\tau_j}(B;\cG)
\gtrsim \mu,
\qquad
1\leq j\leq k-1.
\]

Indeed, by the definition of the $k$-broad cardinality, we may choose transverse $K^{-1}$-caps
\[
\tau_1,\ldots,\tau_k
\]
such that
\[
\#\mathcal{T}_{0,\tau_j}(B;\cG)
\gtrsim\mu,
\qquad
1\leq j\leq k.
\]

By the definition of $\mathcal{T}$, there exists a
$K^{-1}$-cap $\tau_0$ such that
\[
\mathcal{T}
=
\bigl\{
T\in\mathcal{T}_0:\theta(T)\notin\tau_0
\bigr\}.
\]
Therefore, at most one of $\tau_1,\ldots,\tau_k$ can be discarded in passing from
$\mathcal{T}_0$ to $\mathcal{T}$. After relabeling, we may therefore
assume that $\tau_1,\ldots,\tau_{k-1}$ remain. 

Now we have
\[
\mu^{k-1} \sigma
\lesssim
K^{O(1)}
\int_{Q \cap \bigcup \mathbf{B}_3 }
\sum_{T_j\in \mathbb{T}_{\tau_j}}
\chi_{\bigcap_{i=1}^{k-1}T_i}
\lesssim
K^{O(1)}
\sum_{\substack{\tau_1,\ldots,\tau_{k-1}\\ \text{are transverse}}}
\sum_{T_j\in \mathcal{T}_{\tau_j}}
\left|\bigcap_{i=1}^{k-1}T_i\right|.
\]
Combining this with Lemma \ref{multilinear-estimates}, we have
\[
\mu^{k-1}\sigma
\lesssim
K^{O(1)}
R^{\frac{n-k+2}{2}}
\sum_{\tau_1,\ldots,\tau_{k-1}}
\prod_{j=1}^{k-1}
\#\mathcal{T}_{\tau_j}
\lesssim
K^{O(1)}
R^{\frac{n-k+2}{2}}
\bar{m}^{k-1}.
\]
This finishes the proof. 
\end{proof}
\smallskip
\subsubsection{Algorithm 2} Algorithm 2 repeatedly applies Algorithm 1 to obtain additional quantitative relationships among the parameters. By dyadic pigeonholing, we may assume that there exists a finitely overlapping collection
\[
\cB=\{B\}
\]
of $K^2$-balls such that the quantities
\[
\|\mathcal{F}^\la f\|_{L^p(B)}
\]
are comparable for all $B\in\cB$, and
\[
\|\mathcal{F}^\la f\|_{L^p(B_R^{n+1})}^p
\lessapprox
\sum_{B\in\cB}
\|\mathcal{F}^\la f\|_{L^p(B)}^p.
\]

Set
\[
\mathbb{T}^{(0)}=\mathbb{T},
\qquad
\cB^{(0)}=\cB,
\]
and define
\[
\cG^{(0)}(T)=\Om
\qquad
\text{for every }T\in\mathbb{T}.
\]

We now apply Algorithm 1 iteratively. At the $i$-th step, we apply
Algorithm 1 to
\[
(\cB^{(i-1)},\mathbb{T}^{(i-1)};\cG^{(i-1)};f),
\]
which produces the following:

\begin{enumerate}
    \item Parameters
    \[
    \mu^{(i-1)},\;
    W^{(i-1)},\;
    \beta^{(i-1)},\;
    m^{(i-1)},\;
    l^{(i-1)},\;
    \sigma^{(i-1)}.
    \]

    \item Two collections of $K^2$-balls satisfying
    \[
    \cB_3^{(i-1)}
    \subset
    \cB_1^{(i-1)}
    \subset
    \cB^{(i-1)}.
    \]

    \item A collection of planks
    \[
    \T_{W^{(i-1)},\beta^{(i-1)}}^{(i-1)}
    \subset
    \T^{(i-1)},
    \]
    a shading map
    \[
    \cG_{W^{(i-1)}}^{(i-1)}
    \subset
    \cG^{(i-1)},
    \]
    and a collection $\cQ^{(i-1)}$ of $R^{1/2}$-cubes.
\end{enumerate}

We then set
\[
\cB^{(i)}
:=
\cB_3^{(i-1)},
\qquad
\T^{(i)}
:=
\T_{W^{(i-1)},\beta^{(i-1)}}^{(i-1)},
\qquad
\cG^{(i)}
:=
\cG_{W^{(i-1)}}^{(i-1)}.
\]

By construction, for every $i$ we have
\[
\mu^{(i+1)}
\leq
\mu^{(i)}
\lesssim
R^{O(1)},
\]
\[
\bar{m}^{(i+1)}
\leq
m^{(i)}
\leq
\bar{m}^{(i)}
\lesssim
R^{O(1)},
\]
and
\[
\cI(\cB_1^{(i+1)},\T^{(i+1)};\cG^{(i+1)})
\leq
\cI(\cB_1^{(i)},\T^{(i)};\cG^{(i)})
\lesssim
R^{O(1)}.
\]

Let
\[
\ka=R^{\e^{500}}.
\]
By pigeonholing the above monotone sequences, there exists
$N\lesssim_{\varepsilon}1$ such that
\[
\mu^{(N)}
\leq
\ka\,\mu^{(N+1)},
\]
\[
\bar{m}^{(N)}
\leq
\ka\,\bar{m}^{(N+1)}
\leq
\ka\,m^{(N)},
\]
and
\[
\cI(\cB_1^{(N)},\T^{(N)};\cG^{(N)})
\leq
\ka\,
\cI(\cB_1^{(N+1)},\T^{(N+1)};\cG^{(N+1)}).
\]

With a slight abuse of notation, we suppress the superscripts and denote
\[
\mu^{(N)},\;
W^{(N)},\;
\beta^{(N)},\;
m^{(N)},\;
l^{(N)},\;
\cB^{(N+1)},\;
\mathbb{T}^{(N+1)},\;
\cG^{(N+1)},\;
\sigma^{(N)},
\]
and $\cQ^{(N)}$ simply by
\[
\mu,\;
W,\;
\beta,\;
m,\;
l,\;
\cB,\;
\mathbb{T},\;
\cG,\;
\sigma,
\]
and $\cQ$, respectively. We also redefine
\[
f:=\sum_{T\in\mathbb{T}}f_T.
\]

Combining the preceding relations with \eqref{m-mu},
\eqref{incidence-0}, \eqref{incidence-1}, Lemma~\ref{hairbrush},
and Lemma~\ref{sigma-new-bound}, we obtain, whenever $\be>100$,
\begin{equation}\label{cancel0}
lm\mu
\lesssim
K^{O(1)}\#\T.
\end{equation}
Moreover,
\begin{equation}\label{cancel1}
\mu
\lesssim
\ka^{O(1)}m,
\end{equation}
and
\begin{equation}\label{cancel2}
\sigma
\lesssim
\ka^{O(1)}
R^{\frac{n+2-k}{2}}
m^{k-1}\mu^{1-k}.
\end{equation}

\begin{lemma}\label{cancel}
We have
\[
    \mu
\lesssim
K^{O(1)}
\left(\#\mathbb{T}\right)^{1/2}
l^{-1/2}
\min\left\{
1,
\frac{R^{\frac{n-k+2}{2}}}{\sigma}
\right\}^{\frac{1}{2(k-1)}}.
\]

\end{lemma}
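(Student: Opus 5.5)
We combine the three relations \eqref{cancel0}, \eqref{cancel1} and \eqref{cancel2}. The target bound is a minimum of two expressions, so we prove each separately: first $\mu\lesssim K^{O(1)}(\#\T)^{1/2}l^{-1/2}$, then
\[
\mu\lesssim K^{O(1)}(\#\T)^{1/2}l^{-1/2}\bigl(R^{\frac{n-k+2}{2}}/\sigma\bigr)^{\frac{1}{2(k-1)}}.
\]
Throughout, the losses $\ka^{O(1)}=R^{O(\e^{500})}$ are absorbed into $K^{O(1)}=R^{O(\e^{50})}$.

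\textbf{First bound.} By \eqref{cancel1}, $\mu\lesssim\ka^{O(1)}m$, so $\mu^2\lesssim\ka^{O(1)}m\mu$. Then \eqref{cancel0} gives $m\mu\lesssim K^{O(1)}\#\T/l$. Hence $\mu^2\lesssim K^{O(1)}\#\T\,l^{-1}$, which is the first bound.

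\textbf{Second bound.} Rearrange \eqref{cancel2} as
\[
\mu^{k-1}\lesssim\ka^{O(1)}\Bigl(\frac{R^{\frac{n+2-k}{2}}}{\sigma}\Bigr)m^{k-1},
\quad\text{that is,}\quad
\mu\lesssim\ka^{O(1)}\Bigl(\frac{R^{\frac{n+2-k}{2}}}{\sigma}\Bigr)^{\frac1{k-1}}m.
\]
Multiply this by $\mu$ and use \eqref{cancel0} again:
\[
\mu^2\lesssim K^{O(1)}\Bigl(\frac{R^{\frac{n+2-k}{2}}}{\sigma}\Bigr)^{\frac1{k-1}}\frac{\#\T}{l}.
\]
Taking square roots gives the second bound. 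Since $\mu$ is dominated by both expressions, it is dominated by the one with the minimum.

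\textbf{The case $\be\le 100$.} Relation \eqref{cancel0} was derived only for $\be>100$, through the hairbrush Lemma~\ref{hairbrush}. This is the main point that needs care.

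When $\be\lesssim1$, I would instead derive $lm\mu\lesssim K^{O(1)}\#\T$ directly from incidence counting:
\begin{itemize}
\item Each $T$ meets at most $l$ cubes $Q$, and each cube contains at most $K^{O(1)}$ broad balls in the relevant sense. So the incidences are bounded by $\#\T\,\be W$ via \eqref{incidence-1}.
\item The incidences are also bounded below by $\#\bB_1\mu$ via \eqref{incidence-0}.
\item The stability $\cI^{(N)}\le\ka\,\cI^{(N+1)}$ transfers between consecutive steps.
\end{itemize}
The task is then to show that $\#\bB_1\gtrsim K^{-O(1)}Wlm$ still holds without the hairbrush, or to follow the corresponding case in \cite{GanHeLiWu}, where small $\be$ is handled by the trivial two-ends estimate.

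If this fails, the fallback is to state the lemma under the standing assumption $\be>100$, with the complementary case treated separately in the main induction as in \cite{GanHeLiWu}.
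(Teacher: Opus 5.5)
Your two-step derivation is correct and is exactly the paper's proof: \eqref{cancel0} with \eqref{cancel1} gives the first bound, \eqref{cancel0} with \eqref{cancel2} gives the second, and you take the minimum. On $\beta\le 100$, your fallback is what the paper does: the lemma is used only under the standing assumption $\beta>100$, and the case $\beta\le 100$ is handled by induction on scales in the completing-the-reduction step. So the incidence-counting detour is unnecessary, and as written it does not work, since an $R^{1/2}$-cube contains far more than $K^{O(1)}$ balls of radius $K^2$.
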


\begin{proof}
Combining \eqref{cancel0} and \eqref{cancel1}, we obtain
\begin{equation}\label{cancel3}
\mu
\lesssim
K^{O(1)}
\left(\#\mathbb{T}\right)^{1/2}
l^{-1/2}.
\end{equation}
On the other hand, combining \eqref{cancel0} and \eqref{cancel2} gives
\[
\mu
\lesssim
K^{O(1)}
\left(\#\mathbb{T}\right)^{1/2}
l^{-1/2}
\left(
\frac{R^{\frac{n-k+2}{2}}}{\sigma}
\right)^{\frac{1}{2(k-1)}}.
\]
Combining this estimate with \eqref{cancel3} completes the proof.
\end{proof}
\smallskip
\subsubsection{Completing the Reduction} \label{subsubsec:completingreduction}
Cover $B_R^{n+1}$ by a finitely overlapping family of balls
$B_{R/K_o}^{n+1}$ of radius $R/K_o$. If $\beta\leq 100$, then the proof
can be completed by a standard induction-on-scales argument; see
\cite{GanHeLiWu} for details. We may therefore assume that $\beta>100$.

Let $B_0$ be an $R/K_o$-ball for which
\[
\sum_{\substack{B\in\bB\\ B\subset B_0}}
\left\|
\sum_{T\in\bT(B;\cG)}
\mathcal{F}^\la f_T
\right\|_{L^p(B)}^p
\]
is maximal.

Since $p$ satisfies \eqref{broad2linearthreshold}, it suffices to prove
\begin{equation} \label{reduced-form}
    \left\|
\mathcal{F}^\la f_0
\right\|_{BL_{k,3A}^p(X)}
\lesssim
R^{(n-1)\left(\frac12-\frac1p\right)+O(\varepsilon^2)}
\|f\|_{2}^{\frac{2}{p}}
\mathcal{W}(f,B_R^{n+1})^{1-\frac{2}{p}},
\end{equation}
where
\[
X
:=
\bigcup_{\substack{B\in\mathcal{B}\\ B\subset B_0}} B,
\]
and
\[
f_0
:=
\sum_{\substack{
T \in \mathbb{T}_0
}}
f_T, \qquad \mathbb{T}_0 := \{T \in \mathbb{T}: T\cap B_0\neq\emptyset,
B_0\cap\bigcup\cG(T)\neq\emptyset.\}
\]
\medskip
\subsection{Apply Refined Decoupling}\label{sec:l2-estimate}
Since $\mu$ has been redefined, the decoupling estimate in \cite{GanHeLiWu} no longer follows immediately. However, the same estimate can still be established.

We briefly explain this point. Let
\[
q_n:=\frac{2(n+1)}{n-1}
\]
denote the decoupling exponent. For each $B\in\mathcal{B}$, by the new definition of the $k$-broad cardinality, there exist $(k-1)$-dimensional subspaces
\[
V_1,\ldots,V_A
\]
such that, for every $K^{-1}$-cap $\tau$ satisfying
\[
\tau\notin V_j
\qquad
\text{for all }1\leq j\leq A,
\]
we have
\[
\# \{T \in \mathbb{T}_0,\theta(T)\subset \tau, T \cap B \neq \emptyset\}\lesssim \mu.
\]

By the definition of the $k$-broad norm, there exists a $K^{-1}$-cap
$\tau(B)$ satisfying
\[
\tau(B)\notin V_j
\qquad
\text{for all }1\leq j\leq A,
\]
such that
\[
\left\|
\mathcal{F}^\la f_0
\right\|_{BL_{k,A}^{q_n}(B)}
\lesssim
\left\|
\mathcal{F}^\la f_{0,\tau(B)}
\right\|_{L^{q_n}(B)}.
\]
Therefore, following the argument in \cite{GanHeLiWu}, we obtain
\begin{lemma}\label{refined-decoupling-estimate}
    We have
    \[
\|\mathcal{F}^{\lambda} f_0\|_{BL_{k,A}^{q_n}(X)}^{q_n}
\lesssim
K^{O(1)}R^2
\cdot
\left(
R^{-\frac{n-3}{2(n-1)}}
\mu^{\frac{2}{n-1}}
(\#\mathbb{T})^{-\frac{1}{n-1}}
\right)
\|f\|_2^2
\mathcal{W}(f,B_R^{n+1})^{q_n-2}.
\]
\end{lemma}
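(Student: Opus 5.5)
The plan is to reproduce the refined decoupling argument of \cite{GanHeLiWu}, checking only that the redefined broad cardinality $\mu$ enters in the same place. On each $B\in\mathcal{B}$ with $B\subset B_0$, the discussion preceding the lemma gives a cap $\tau(B)$, transverse to all $V_j$, with
\[
\|\mathcal{F}^\la f_0\|_{BL^{q_n}_{k,A}(B)}\lesssim \|\mathcal{F}^\la f_{0,\tau(B)}\|_{L^{q_n}(B)} .
\]
Moreover, $\tau(B)$ carries at most $\lesssim\mu$ planks of $\mathbb{T}_0$ meeting $B$. After pigeonholing over the $K^{O(1)}$ possible caps, we may fix a single $\tau$. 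We then sum over the $K^2$-balls and reduce to estimating $\|\sum_{T\in\mathbb{T}_{0,\tau}}\mathcal{F}^\la f_T\|_{L^{q_n}(Y)}^{q_n}$, where $Y$ is a union of $K^2$-balls and each ball meets $\lesssim\mu$ planks.

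Next, I would apply the refined decoupling theorem (the variable-coefficient version for cinematic curvature, as used in \cite{GanHeLiWu}) at the exponent $q_n$, with the planks having comparable $\|f_T\|_2^2\sim R^{\frac{n-1}{2}}$. Refined decoupling gives
\[
\Big\|\sum_T \mathcal{F}^\la f_T\Big\|_{L^{q_n}(Y)}^{q_n}\lessapprox K^{O(1)}\Big(\frac{\mu}{\#\mathbb{T}}\Big)^{\frac{2}{n-1}}\Big(\sum_T\|\mathcal{F}^\la f_T\|_{L^{q_n}}^2\Big)^{q_n/2}.
\]
Here the multiplicity factor is $\mu$, which bounds the number of planks through each point of $Y$ in the relevant direction. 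Each wave packet contributes $\|\mathcal{F}^\la f_T\|_{q_n}^{q_n}\sim |T|^{1-q_n/2}\|f_T\|_2^{q_n}\cdot R^{O(1)}$ with explicit powers. Substituting the normalization $\|f_T\|_2^2\sim R^{\frac{n-1}{2}}$ and $\|f\|_2^2\sim \#\mathbb{T}\,R^{\frac{n-1}{2}}$, I would then collect the powers of $R$. The bookkeeping should produce $R^2\cdot R^{-\frac{n-3}{2(n-1)}}\mu^{\frac{2}{n-1}}(\#\mathbb{T})^{-\frac{1}{n-1}}\|f\|_2^2\,(\cdot)^{q_n-2}$, where the remaining factor is a local $L^2$ density.

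Finally, I would trade that density factor for $\mathcal{W}(f,B_R^{n+1})^{q_n-2}$. This uses the fact that a single plank $T$, localized to its base box $T^\flat\subset V$ with $s=R^{-1/2}$, gives $\|f_T\|_2^2/|V|\le\mathcal{W}^2$; in particular $R^{\frac{n-1}{2}}/R^{\frac{n-1}{2}}\lesssim\mathcal{W}^2$, so $\mathcal{W}\gtrsim 1$ in this normalization. This is exactly as in \cite{GanHeLiWu}.

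The main obstacle is the first step. The refined decoupling multiplicity must be controlled by the new $\mu$, which counts planks in a single broad cap $\tau_0$ off the subspaces $V_i$, rather than by the old $\mu$. I would handle this as in the preceding paragraph of the paper. The choice of $V_1,\dots,V_A$ in the definition of $\#_{\bbr,k}$ minimizes the maximal count over caps transverse to all $V_i$. The broad norm, in turn, only sees caps transverse to some $A$ subspaces, and these may be taken to include the $V_i$ (this is where $BL_{k,3A}$ versus $BL_{k,A}$ gives room). So every cap that the broad norm sees carries at most $\mu$ planks through $B$, which is precisely the input refined decoupling needs. The rest of the argument is unchanged from \cite{GanHeLiWu}.
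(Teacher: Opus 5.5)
Your proposal is correct and follows the paper's route. You fix the subspaces $V_1,\dots,V_A$ realizing the $k$-broad cardinality at $B$, bound the broad norm on $B$ by the single cap $\tau(B)\notin V_j$ (which carries $\lesssim\mu$ planks of $\mathbb{T}_0$ through $B$), and then run the refined decoupling argument of \cite{GanHeLiWu} unchanged.

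Two minor clarifications. First, no extra room from $BL_{k,3A}$ is needed at this step: the broad norm is a minimum over subspace choices, so you may plug in the $V_j$ directly, and the $3A$ only appears in the later interpolation. Second, the final conversion amounts to $\mathcal{W}(f,B_R^{n+1})^4\gtrsim \#\mathbb{T}\,R^{-\frac{n+1}{2}}$. This needs the $s=1$ bound $\mathcal{W}^2\gtrsim\|f\|_2^2/R^n$ together with $\mathcal{W}\gtrsim 1$, not $\mathcal{W}\gtrsim 1$ alone.
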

\medskip
\subsection{A Refined $L^2$-estimate}\label{sec:l2-estimate}
We first recall the $L^2$ estimate from \cite{GanHeLiWu}.
\begin{lemma} \label{original-l2-estimate}
We have
    \[
\|\mathcal{F}^{\lambda} f_0\|_{BL_{k,A}^{2}(X)}
\lessapprox
l^{1/2}\sigma^{\frac{1}{n+k+1}}
\|f\|_{L^2}.
\]
\end{lemma}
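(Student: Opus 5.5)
We prove the bound by the argument of \cite{GanHeLiWu}: an $L^2$ bound in physical space, with the broad norm localized to the support of the shaded wave packets. The main idea is that the only nontrivial gain, $\sigma^{\frac{1}{n+k+1}}$, comes from a Kakeya/multilinear-type bound on how much of each $R^{1/2}$-cube $Q\in\cQ$ the balls of $\bB$ can fill. Everything else is $L^2$ orthogonality, with each packet counted at most $l$ times.

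First, we reduce the broad norm to a linear $L^2$ norm. Since $\mu_{\mathcal F^\la f_0}(B)\le \max_\tau\int_B|\mathcal F^\la f_{0,\tau}|^2$ and there are only $K^{O(1)}$ caps $\tau$, we get
\[
\|\mathcal F^\la f_0\|_{BL^2_{k,A}(X)}^2\lesssim K^{O(1)}\sum_{B\subset X}\int_B|\mathcal F^\la f_0|^2 .
\]
Every $B\subset X$ lies in some $Q\in\cQ$, because $\bB=\bB_3$ after Algorithm 2. So it suffices to bound
\[
\sum_{Q\in\cQ}\int_{Q\cap\bigcup\bB}|\mathcal F^\la f_{0}|^2 .
\]

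Second, we run a local $L^2$ estimate on each $Q$. Inside $Q$, only packets $T$ with $T\cap Q\ne\emptyset$ contribute, up to rapidly decaying tails. At scale $R^{1/2}$ these packets restrict to $R^{1/2}$-scale wave packets, i.e. slabs of dimensions $R^{2\delta}\times R^{1/2}\times\cdots$, as in the proof of Lemma~\ref{multilinear-estimates}. Their contribution therefore lives on a union of such slabs inside $Q$, which has volume comparable to $\sigma$. We then use an $L^2$-to-$L^2$ bound on a subset of measure $\sigma$ of $Q$. We Fourier-localize $\mathcal F^\la f_0$ on $Q$ to the $R^{-1/2}$-neighborhood of the cone, i.e. the $R^{1/2}$-scale decomposition. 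A local Stein--Tomas / Mizohata–Takeuchi-type inequality for functions with this frequency support, restricted to a set of measure $\sigma$, gives
\[
\int_{Q\cap\bigcup\bB}|\mathcal F^\la f_0|^2\lesssim R^{O(\delta)}\,\sigma^{\frac{2}{n+k+1}}\sum_{T\cap Q\neq\emptyset}\|f_T\|_2^2 .
\]
The exponent $\frac{2}{n+k+1}$ is what the $k$-broad condition yields after interpolating the trivial bound ($\sigma\le R^{(n+1)/2}$) against the broad localization. Here we follow exactly the corresponding computation in \cite{GanHeLiWu}; since Lemma~\ref{original-l2-estimate} is stated as a recollection, we only need to check that our refined definitions of $\mu$ and the broad subcollection do not affect it. They do not: the proof uses only the constancy of $\sigma$ over $\cQ$ and the definition of $l$.

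Third, we sum over $Q$:
\[
\sum_Q\sum_{T\cap Q\ne\emptyset}\|f_T\|_2^2\le l\sum_T\|f_T\|_2^2\lesssim l\|f\|_2^2,
\]
using the definition of $l$ and orthogonality of wave packets. Taking square roots gives the lemma.

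The main obstacle is the second step: justifying the local gain $\sigma^{1/(n+k+1)}$ on a cube. This requires handling the $O(1)$ curvature errors of $\gamma^\la$ at scale $R^{1/2}$ and the tails of packets entering $Q$. We would deal with both by freezing coefficients on $Q$, again as in the proof of Lemma~\ref{multilinear-estimates}, and quoting the relevant argument from \cite{GanHeLiWu}.
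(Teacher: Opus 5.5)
The gap is in your first two steps. Once you bound $\mu_{\mathcal{F}^\la f_0}(B)$ by $K^{O(1)}\int_B|\mathcal{F}^\la f_0|^2$, the $k$-broad structure is gone. The per-cube bound you then need,
\[
\int_{Q\cap\bigcup\bB}|\mathcal{F}^\la f_0|^2\lesssim R^{O(\delta)}\sigma^{\frac{2}{n+k+1}}\sum_{T\cap Q\neq\emptyset}\|f_T\|_2^2,
\]
is a linear $L^2$ estimate on a set of measure $\sigma$, and it is false. Linear tools give at best $\sigma^{\frac{2}{n+1}}$, by H\"older and Stein--Tomas at $q_n$, since $\frac12-\frac1{q_n}=\frac1{n+1}$. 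The Knapp example shows this is sharp. Let $\widehat g$ be the indicator of an angular sector of width $\rho\in[R^{-1/4},K^{-1}]$ around $e_n$. Then $|e^{it\sqrt{-\Delta}}g|\gtrsim\rho^{n-1}$ on $Y=\{|t|\le c\rho^{-2},\ |x_n+t|\le c,\ |(x_1,\dots,x_{n-1})|\le c\rho^{-1}\}$. This set has measure $\sim\rho^{-(n+1)}$ and fits in an $R^{1/2}$-cube, so $\int_Y|e^{it\sqrt{-\Delta}}g|^2\gtrsim\rho^{-2}\|g\|_2^2\sim|Y|^{\frac{2}{n+1}}\|g\|_2^2$. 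For $\rho=R^{-1/4}$ this exceeds your bound by $R^{\frac{k}{2(n+k+1)}}$.

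The configuration does not rescue you. At scale $R$ this $g$ is one packet per $\theta$ in the sector, all of equal mass, so the normalization of $\|f_T\|_2$ does not exclude it. Moreover, $k$-broadness of the balls of $\bB$ is only a condition on the counts $\#_{\bbr,k}$. A ball may carry, besides its broad packets, many such packets from a single $K^{-1}$-cap, and these dominate $\int_B|\mathcal{F}^\la f_0|^2$; only the broad norm discards them. Two further problems: $\sigma$ is $|Q\cap\bigcup\bB_2|$, the measure of the broad balls in $Q$, not the volume of a union of packet slabs (which can be all of $Q$). And neither the Mizohata--Takeuchi appeal nor ``interpolation against the broad localization'' is an argument that produces $\frac{2}{n+k+1}$.

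What is missing is to keep the broad norm and let H\"older produce the exponent. On each $K^2$-ball, fix the subspaces $V_1,\dots,V_A$ that are optimal for the $BL^{r_n}$ quantity. Then $\max_\tau\int_B|F_\tau|^2\le|B|^{1-\frac{2}{r_n}}\max_\tau\bigl(\int_B|F_\tau|^{r_n}\bigr)^{2/r_n}$, and H\"older in the sum over $B$ gives
\[
\|\mathcal{F}^\la f_0\|_{BL^2_{k,A}(Q\cap X)}\le|Q\cap X|^{\frac12-\frac1{r_n}}\|\mathcal{F}^\la f_0\|_{BL^{r_n}_{k,A}(Q)}\lesssim\sigma^{\frac{1}{n+k+1}}\|\mathcal{F}^\la f_0\|_{BL^{r_n}_{k,A}(Q)},
\]
because $r_n=\frac{2(n+k+1)}{n+k-1}$ gives $\frac12-\frac1{r_n}=\frac1{n+k+1}$. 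This is the entire source of the $\sigma$-gain.

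Next apply the $L^2$-based (Guth-type) $k$-broad estimate at exponent $r_n$ and scale $R^{1/2}$: $\|\mathcal{F}^\la f_0\|_{BL^{r_n}_{k,A}(Q)}\lessapprox\|f_{0,Q}\|_2$, where $f_{0,Q}=\sum_{T\cap Q\neq\emptyset}f_T$, up to negligible tails. Lemma~\ref{sharp-l2-broad-estimate} is its density-normalized consequence, since $\|f\|_2^2\lesssim R^n\mathcal{W}(f,B_R^{n+1})^2$. At scale $R^{1/2}$ you need the $L^2$ form itself, because the density form would leave a factor of $\mathcal{W}$ rather than $\|f_{0,Q}\|_2$. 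Your third step (each packet meets at most $l$ cubes of $\cQ$, then orthogonality) finishes the proof exactly as you wrote it; no coefficient freezing or slab geometry is needed.
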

This bound becomes ineffective when $\sigma$ is large. To obtain a refinement in this regime, we invoke \cite[Lemma 3.18]{GanHeLiWu}.

\begin{lemma} \label{l2-estimate-0}
Let $Q$ be an $R^{1/2}$-ball and let $\bT$ be a collection of wave
packets. Then
\[
\left\|
\sum_{T\in\bT}\mathcal{F}^{\lambda}f_T
\right\|_{L^2(Q)}^2
\lesssim
R^{1/2}
\sum_{T\in\bT}\|f_T\|_2^2.
\]
\end{lemma}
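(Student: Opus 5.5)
We prove Lemma~\ref{l2-estimate-0} by expanding the square on $Q$ and using the local geometry of the wave packets at scale $R^{1/2}$. This is the same local picture used in the proof of Lemma~\ref{multilinear-estimates}. There we saw that each plank $T$ at scale $R$, intersected with an $R^{1/2}$-ball, looks like a slab of dimensions $R^{2\delta}\times R^{1/2}\times\cdots\times R^{1/2}$. Its normal is $\partial_z\phi^\lambda(z_Q;\xi_\theta)$, where $\xi_\theta$ is the frequency direction of the cap $\theta$.

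The first step is to organize $\bT$ by caps $\theta\in\Theta_{R^{-1/2}}$. On $Q$, the function $\mathcal{F}^\lambda f_T$ is essentially $e^{i\phi^\lambda(z;\xi_\theta)}$ times a bump adapted to this slab. We sum over the packets of a fixed $\theta$ that meet $Q$. The slabs of one cap are parallel translates in the thin direction of a common slab, and they are pairwise essentially disjoint. Hence the contributions from a fixed $\theta$ are almost orthogonal in $L^2(Q)$.

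The second step handles different caps. Here we use the space-time Fourier support. On $Q$, after localizing, the function $\mathcal{F}^\lambda f_\theta$ has space-time frequency concentrated in an $R^{-1/2}$-neighborhood of the point $\partial_z\phi^\lambda(z_Q;\xi)$, $\xi\in\theta$, of the cone-like surface. These neighborhoods are finitely overlapping as $\theta$ varies. Plancherel on the ball $Q$, with a smooth cutoff $w_Q$, then gives
\[
\Bigl\|\sum_\theta\mathcal{F}^\lambda f_\theta\Bigr\|_{L^2(Q)}^2\lesssim\sum_\theta\|\mathcal{F}^\lambda f_\theta\|_{L^2(w_Q)}^2 .
\]
The analogue of this for the ordinary cone is the standard local orthogonality on $R^{1/2}$-balls.

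The third step is a fixed-time $L^2$ bound. For each $t$, the operator $\mathcal{F}^\lambda(\cdot)(\cdot,t)$ is bounded on $L^2$, so for each slice
\[
\Bigl\|\sum_{T\in\bT_\theta}\mathcal{F}^\lambda f_T(\cdot,t)\Bigr\|_{L^2}^2\lesssim\sum_{T\in\bT_\theta}\|f_T\|_2^2 ,
\]
using the orthogonality of wave packets of the same cap. Integrating over the time interval of length $\sim R^{1/2}$ spanned by $Q$ produces the factor $R^{1/2}$. Summing over $\theta$ then gives the claim.

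The main obstacle is the variable-coefficient orthogonality across different $\theta$ in the second step. The frequency localization is only approximate, since the phase is not linear on $Q$. I would handle it as in the proof of Lemma~\ref{multilinear-estimates}. On $Q$ the phase is linearized up to $O(1)$ errors, because $R\le\lambda^{1-\varepsilon}$ keeps the Hessian contributions at $O(R/\lambda)\ll1$. This lets us treat $\mathcal{F}^\lambda f_\theta\, w_Q$ as a modulated bump with Fourier support in an $R^{-1/2}$-cap. Any remaining tails are Schwartz and are absorbed into the implicit constant. Alternatively, since the paper cites \cite[Lemma 3.18]{GanHeLiWu}, one can simply invoke that result directly.
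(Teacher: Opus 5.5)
The paper does not prove this lemma; it only quotes \cite[Lemma 3.18]{GanHeLiWu}, which is your fallback option. Your self-contained sketch is correct in outline, but most of it is unnecessary.

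Step 1 is never used. In Step 3 you rely on the orthogonality of the $f_T$ in $L^2(\mathbb{R}^n)$, not on the disjointness of the slabs $T\cap Q$.

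Step 2, the cross-cap local orthogonality on $Q$ that you single out as the main obstacle, can be skipped entirely. The right-hand side sums over all of $\bT$, so you may enlarge $Q$ to $\mathbb{R}^n\times I_Q$ with $|I_Q|\sim R^{1/2}$. Then apply the uniform fixed-time bound $\|\mathcal{F}^{\lambda}g(\cdot,t)\|_{L^2}\lesssim\|g\|_2$ directly to $g=\sum_{T\in\bT}f_T$; this bound follows from the nondegeneracy of $\partial_x\partial_\xi\phi^\lambda$. Finish with $\|g\|_2^2\lesssim\sum_{T\in\bT}\|f_T\|_2^2$. Across different caps this last inequality is just Plancherel, because the $\widehat{f_\theta}$ have finitely overlapping supports. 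Within a cap it is the usual wave packet orthogonality. So no linearization of the phase on $Q$ and no tracking of the $O(R/\lambda)$ phase error is needed.

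The local slab picture does matter for the form in which the paper actually applies the lemma in the refined $L^2$ estimate. There the sum on the right is restricted to $T$ with $T\cap Q\neq\emptyset$. That restriction comes from the rapid decay of $\mathcal{F}^{\lambda}f_T$ away from $T$, up to $\mathrm{RapDec}(R)\|f\|_2^2$ errors. This is a locality property of wave packets, not the cross-cap orthogonality of your Step 2, and the statement as given does not require it.
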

For each $Q\in\mathcal{Q}$, Lemma~\ref{l2-estimate-0} gives
\[
\|\mathcal{F}^{\lambda}f_0\|_{BL_{k,A}^{2}(Q)}^2
\lesssim
R^{1/2}
\sum_{\substack{T\in\mathbb{T}_0\\ T\cap Q\neq\emptyset}}
\|f_T\|_2^2.
\]
Since each $T\in\mathbb{T}_0$ intersects at most $l$ cubes in
$\mathcal{Q}$, summing over $Q\in\mathcal{Q}$ yields
\[
\begin{aligned}
\|\mathcal{F}^{\lambda}f_0\|_{BL_{k,A}^{2}(X)}^2
&\lesssim
\sum_{Q\in\mathcal{Q}}
\|\mathcal{F}^{\lambda}f_0\|_{BL_{k,A}^{2}(Q)}^2 \\
&\lesssim
lR^{1/2}\|f\|_2^2.
\end{aligned}
\]
Combining this estimate with the wave packet density $L^2$ estimate,
we obtain the following refinement.

\begin{lemma}\label{l2-estimate-refined}
We have
\[
\|\mathcal{F}^{\lambda}f_0\|_{BL_{k,A}^{2}(X)}
\lessapprox
l^{1/2}
\min\left\{
\sigma^{\frac{1}{n+k+1}},
R^{1/4}
\right\}
\|f\|_{L^2}.
\]
\end{lemma}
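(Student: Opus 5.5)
The plan is to prove each of the two bounds inside the minimum separately and then take the better one. The bound with $\sigma^{\frac{1}{n+k+1}}$ is exactly Lemma~\ref{original-l2-estimate}, which we simply invoke as stated. The bound with $R^{1/4}$ is the one derived just above: square it, take square roots, and read off
\[
\|\mathcal{F}^{\lambda}f_0\|_{BL_{k,A}^{2}(X)}\lesssim l^{1/2}R^{1/4}\|f\|_2 .
\]
Both right-hand sides carry the common factor $l^{1/2}\|f\|_{L^2}$. Hence the left side is $\lessapprox l^{1/2}\|f\|_2\cdot\min\{\sigma^{\frac{1}{n+k+1}},R^{1/4}\}$, with the implicit constant being the larger of the two, at worst $K^{O(1)}R^{O(\varepsilon^2)}$, which the symbol $\lessapprox$ absorbs.

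The only point needing care is justifying the $R^{1/4}$ bound rigorously. First, the broad norm on $X$ should be dominated by the broad norms on the cubes $Q\in\mathcal{Q}$. This holds because, after the pruning of Algorithm 1 and Algorithm 2, every $K^2$-ball in $X$ meets some $Q\in\mathcal{Q}$ (this is the definition of $\bB_3$). Each ball is therefore contained in an $O(1)$-enlargement of such a $Q$, and the $\mu$-quantity in the broad norm is additive over disjoint balls.

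Second, on each $Q$ the broad norm is at most $\max_\tau\|\mathcal{F}^\lambda f_{0,\tau}\|_{L^2}^2$ per ball. We bound it by the full sum over wave packets meeting $Q$ (up to rapidly decaying tails), using Lemma~\ref{l2-estimate-0} applied to the sub-collection $\{T\in\mathbb{T}_0:T\cap Q\ne\emptyset\}$. The tails from packets not meeting $Q$ are negligible by the Schwartz decay of wave packets.

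Finally, we swap the sums over $Q$ and $T$. Each $T\in\mathbb{T}_0\subset\mathbb{T}$ meets at most $l$ cubes, by the definition of $l$ as a supremum over $\mathbb{T}_{W,\beta}$, which is the current $\mathbb{T}$. Orthogonality then gives $\sum_T\|f_T\|_2^2\lesssim\|f\|_2^2$.

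I expect the main, though mild, obstacle to be the first of these steps. It is the bookkeeping that the $BL^2$ norm localizes to $\bigcup\mathcal{Q}$ with only finite overlap, and that $l$ refers to the same plank family $\mathbb{T}_0$ after the redefinitions in Algorithm 2. Once this is checked, combining the two estimates is immediate.
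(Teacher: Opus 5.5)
Your proposal is correct and follows the paper's own argument. The $\sigma^{\frac{1}{n+k+1}}$ bound is just Lemma~\ref{original-l2-estimate}; the $R^{1/4}$ bound comes from applying Lemma~\ref{l2-estimate-0} on each $Q\in\mathcal{Q}$ and summing, using that each plank of $\mathbb{T}_0\subset\mathbb{T}$ meets at most $l$ cubes, and the bookkeeping you flag (localizing to $\bigcup\mathcal{Q}$ via $\bB_3$, and replacing the per-ball $\max_\tau$ by a sum over $\tau$) is exactly what the paper leaves implicit.
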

\medskip
\subsection{Interpolation}\label{sec:interpolation}
In this subsection, we interpolate the estimates in
Lemmas~\ref{refined-decoupling-estimate}, \ref{l2-estimate-refined}, and \ref{cancel} with the following broad estimate from
\cite{GanHeLiWu}.

\begin{lemma}\cite[Corollary 9.2]{GanHeLiWu}\label{sharp-l2-broad-estimate}
Let
\[
r_n
=
\frac{2(n+k+1)}{n+k-1}.
\]
Then
\[
\|\mathcal{F}^{\lambda} f_0\|_{BL_{k,A}^{r_n}(X)}
\lessapprox
R^{n\left(\frac12-\frac1{r_n}\right)}
\|f\|_2^{\frac{2}{r_n}}
\mathcal{W}(f,B_R^{n+1})^{1-\frac{2}{r_n}}.
\]
\end{lemma}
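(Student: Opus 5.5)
This is Corollary 9.2 of \cite{GanHeLiWu}, and I would reproduce its proof: interpolate between an $L^2$-type estimate and the $k$-broad restriction estimate at the endpoint $r_n=\frac{2(n+k+1)}{n+k-1}$, using the wave packet density to control the $L^\infty$-like factor.

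\textbf{Step 1: the sharp $k$-broad estimate.} Take as black box the $k$-broad estimate for Fourier integral operators with cinematic curvature in the form used in \cite{GanHeLiWu}. It controls $\|\mathcal{F}^\lambda g\|_{BL^{r_n}_{k,A}}$ by $\|g\|_2$ together with a factor measuring how many $R^{-1/2}$-directions cross a given region. In the Guth--Hickman--Iliopoulou polynomial partitioning argument, the bound at the endpoint $r_n$ comes out as
\[
\|\mathcal{F}^\lambda g\|_{BL^{r_n}}^{r_n}\lessapprox R^{\#}\,\|g\|_2^2\,\max_\theta\|g_\theta\|^{r_n-2}_{L^2_{\mathrm{avg}}},
\]
where $\|g_\theta\|_{L^2_{\mathrm{avg}}}$ is the $L^2$ norm averaged over $R^{1/2}$-balls.

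\textbf{Step 2: replace the $\theta$-local factor by $\mathcal{W}$.} Rather than using $\|g\|_\infty$, I would run the polynomial partitioning induction keeping the wave packet density $\mathcal{W}(g,B_R)$ as the auxiliary quantity. Two properties are needed.
\begin{enumerate}
\item $\mathcal{W}$ is monotone under passing to subcollections of wave packets. This holds because it is a supremum of nonnegative sums of $\|g_T\|_2^2$.
\item $\mathcal{W}$ is compatible with the cell and wall decomposition. In the cellular case, each cell sees a subcollection, so the density does not increase. In the algebraic (wall) case, the transverse and tangent packets are also subcollections. After parabolic/Lorentz rescaling of the tangential part, the density at the smaller scale $\rho$ is bounded by that at scale $R$ times the appropriate power of $R/\rho$. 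This is the point of the $V_{\tau,R}=(Rs^2)\tau^*$ normalization.
\end{enumerate}

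\textbf{Step 3: scaling.} Check that the exponent $n(\frac12-\frac1{r_n})$ matches $\frac{n}{n+k+1}$ and closes the induction with losses absorbed into $\lessapprox$.

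\textbf{Main obstacle.} The hardest step is the tangential case. There, the lower-dimensional induction must be run with the density of the rescaled function, and one must verify the Lorentz-rescaling inequality $\mathcal{W}(\tilde g,\cdot)\lesssim(\text{scale factor})\,\mathcal{W}(g,\cdot)$. Since this is exactly what \cite{GanHeLiWu} establishes, I would simply cite \cite[Corollary 9.2]{GanHeLiWu} after checking that our $f_0$, a restriction of $f$ to $\mathbb{T}_0$, satisfies the hypotheses with $\mathcal{W}(f_0)\le\mathcal{W}(f)$ and $\|f_0\|_2\le\|f\|_2$.
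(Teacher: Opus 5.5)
Your proposal ends where the paper does. The paper gives no argument of its own and simply imports \cite[Corollary 9.2]{GanHeLiWu}. Your check that replacing $f$ by $f_0=\sum_{T\in\mathbb{T}_0}f_T$ only decreases $\|\cdot\|_2$ and $\mathcal{W}(\cdot,B_R^{n+1})$, up to constants, is all that is needed; this follows from almost orthogonality of the wave packets and positivity of the sums in \eqref{eq:wave-packet-density}. No further adjustment is required because this broad estimate, unlike the decoupling bound, does not involve the redefined parameter $\mu$.

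The heuristic outline in Steps 1--3 is not needed and would not by itself be a proof. For instance, Step 2 does not show that a loss in $R/\rho$ when comparing densities across scales still closes the induction at the endpoint $r_n$. Since you ultimately rely on the citation, this does not affect correctness.
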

We now interpolate Lemmas~\ref{refined-decoupling-estimate},
\ref{sharp-l2-broad-estimate}, and \ref{l2-estimate-refined}.
Recall that
\[
k=\frac{n+4}{2},
\qquad
q_n=\frac{2(n+1)}{n-1},
\qquad
r_n=\frac{6(n+2)}{3n+2}.
\]
Set
\[
N:=\#\mathbb{T}.
\]
By Lemma~\ref{cancel},
\[
\mu
\lesssim
K^{O(1)}N^{1/2}l^{-1/2}
\min\left\{
1,\frac{R^{n/4}}{\sigma}
\right\}^{\frac{1}{n+2}}.
\]

Taking the $q_n$-th root in
Lemma~\ref{refined-decoupling-estimate} and applying
Lemma~\ref{cancel}, we obtain
\[
\begin{aligned}
\|\mathcal{F}^{\lambda}f_0\|_{BL_{k,A}^{q_n}(X)}
\lesssim{}&
K^{O(1)}R^{(n-1)\left(\frac12-\frac1{q_n}\right)}
R^{-\frac{n-3}{4(n+1)}}
l^{-\frac{1}{2(n+1)}} \\
&\times
\min\left\{
1,\frac{R^{n/4}}{\sigma}
\right\}^{\frac{1}{(n+1)(n+2)}}
\|f\|_2^{\frac{2}{q_n}}
\mathcal{W}(f,B_R^{n+1})^{1-\frac{2}{q_n}}.
\end{aligned}
\]

Lemma~\ref{sharp-l2-broad-estimate} can be written as
\[
\begin{aligned}
\|\mathcal{F}^{\lambda}f_0\|_{BL_{k,A}^{r_n}(X)}
\lessapprox{}&
R^{(n-1)\left(\frac12-\frac1{r_n}\right)}
R^{\frac{2}{3(n+2)}} \\
&\times
\|f\|_2^{\frac{2}{r_n}}
\mathcal{W}(f,B_R^{n+1})^{1-\frac{2}{r_n}},
\end{aligned}
\]
while Lemma~\ref{l2-estimate-refined} gives
\[
\|\mathcal{F}^{\lambda}f_0\|_{BL_{k,A}^{2}(X)}
\lessapprox
l^{1/2}
\min\left\{
\sigma^{\frac{2}{3(n+2)}},
R^{1/4}
\right\}
\|f\|_2.
\]

Suppose first that $n\geq6$ is even. We interpolate the above three
estimates with weights
\[
\theta_q
=
\frac{16(n+1)}{(2n-1)(3n+2)},
\qquad
\theta_r
=
\frac{3(2n^2-5n-14)}{(2n-1)(3n+2)},
\]
and
\[
\theta_2
=
\frac{24}{(2n-1)(3n+2)}.
\]
These coefficients are nonnegative and satisfy
\[
\theta_q+\theta_r+\theta_2=1,
\qquad
\frac1p
=
\frac{\theta_q}{q_n}
+
\frac{\theta_r}{r_n}
+
\frac{\theta_2}{2}.
\]

By interpolation,
\[
\begin{aligned}
\|\mathcal{F}^{\lambda}f_0\|_{BL_{k,3A}^{p}(X)}
\lessapprox{}&
K^{O(1)} R^{(n-1)\left(\frac12-\frac1p\right)}
E
\|f\|_2^{\frac2p}
\mathcal{W}(f,B_R^{n+1})^{1-\frac2p},
\end{aligned}
\]
where
\begin{equation}\label{interpolation-factor}
\begin{aligned}
E
={}&
R^{-\frac{n-3}{4(n+1)}\theta_q
+\frac{2}{3(n+2)}\theta_r}
l^{-\frac{\theta_q}{2(n+1)}+\frac{\theta_2}{2}} \\
&\times
\min\left\{
1,\frac{R^{n/4}}{\sigma}
\right\}^{\frac{\theta_q}{(n+1)(n+2)}}
\min\left\{
\sigma^{\frac{2}{3(n+2)}},
R^{1/4}
\right\}^{\theta_2}.
\end{aligned}
\end{equation}

Since $l\lesssim R^{1/2}$, the contribution of the $l$-factor in
\eqref{interpolation-factor} is at most
\[
R^{\frac14\left(
\theta_2-\frac{\theta_q}{n+1}
\right)}.
\]
Moreover, using $\sigma\leq R^{(n+1)/2}$, the maximal contribution
of the two $\sigma$-dependent factors is
\[
R^{\frac{n}{6(n+2)}\theta_2}.
\]
For the above choice of weights,
\[
-\frac{n-3}{4(n+1)}\theta_q
+\frac{2}{3(n+2)}\theta_r
+\frac14\left(
\theta_2-\frac{\theta_q}{n+1}
\right)
+\frac{n}{6(n+2)}\theta_2
=0.
\]
Hence
\[
E\lesssim R^{O(\varepsilon^2)},
\]
and therefore
\[
\left\|
\mathcal{F}^{\lambda}f_0
\right\|_{BL_{k,3A}^{p}(X)}
\lessapprox
K^{O(1)} R^{(n-1)\left(\frac12-\frac1p\right)+O(\varepsilon^2)}
\|f\|_2^{\frac2p}
\mathcal{W}(f,B_R^{n+1})^{1-\frac2p}.
\]

When $n=4$, the optimal interpolation lies on the boundary
$\theta_r=0$. We take
\[
\theta_q=\frac{65}{83},
\qquad
\theta_r=0,
\qquad
\theta_2=\frac{18}{83}.
\]
Then
\[
\frac1p
=
\frac{\theta_q}{q_4}
+
\frac{\theta_2}{2}
=
\frac{57}{166},
\]
so that
\[
p=\frac{166}{57}.
\]
In this case, the maximal additional $R$-exponent is
\[
-\frac{1}{10}\theta_q
+
\frac{13}{36}\theta_2
=0.
\]
Thus the same conclusion holds for $n=4$.
\bigskip

\section{Limitations}\label{sec:counterexample}
In this section, we prove Theorem~\ref{thm:counterexample} by constructing a nearly extremal Euclidean configuration of wave packets. The example identifies a geometric obstruction that already occurs in flat space and demonstrates a limitation of the present wave packet density method in the local smoothing problem.

The counterexample considered in this paper arose from an attempt to combine the polynomial Wolff axioms with the wave packet density method in order to improve local smoothing estimates in the Euclidean setting. During this investigation, we encountered several degenerate configurations for which the available density estimates seemed particularly resistant to improvement, notably when the wave packets are tangent to a hyperbolic-type hypersurface. In the course of testing possible refinements with the assistance of ChatGPT, we recognized that these configurations are close to saturating the relevant wave packet density estimate. This led to the nearly extremal construction developed below and revealed a genuine obstruction within the current framework.

Now we start the proof. We treat the odd- and even-dimensional cases simultaneously. Define
\[
(l,\beta)
=
\begin{cases}
\left(\dfrac{n+3}{2},\dfrac12\right),
& n \text{ odd},\\[3mm]
\left(\dfrac n2+2,0\right),
& n \text{ even}.
\end{cases}
\]
Thus \(3\le l\le n\).

Fix a sufficiently small constant \(c>0\), and consider the \(l\)-dimensional hyperboloid
\[
\mathcal H_R
=
\left\{
(x',x'',t)
\in
\mathbb R^l\times\mathbb R^{n-l}\times\mathbb R:
|x'|^2-t^2=c^2R^2,\quad x''=0
\right\}.
\]

\medskip

\noindent
\textbf{Step 1. Construction of the wave packets.}

Let
\[
\Omega_R\subset S^{l-1}
\]
be a maximal \(CR^{-1/2}\)-separated set, where \(C\) is a sufficiently large fixed constant. Then
\begin{equation}
\label{eq:omega-number}
\#\Omega_R\sim R^{(l-1)/2}.
\end{equation}

For each \(\omega\in\Omega_R\), let
\[
U_\omega
\subset
S^{l-1}\cap\omega^\perp
\cong S^{l-2}
\]
be a maximal \(CR^{-1/2}\)-separated set. Hence
\begin{equation}
\label{eq:u-number}
\#U_\omega
\sim R^{(l-2)/2}.
\end{equation}

Finally, let \(Q_R\) be a \(C\)-separated subset of
\[
[-c_1R^\beta,c_1R^\beta]
\]
of maximal cardinality, where \(c_1>0\) is sufficiently small. Thus
\begin{equation}
\label{eq:q-number}
\#Q_R\sim R^\beta.
\end{equation}
When \(n\) is even, \(\beta=0\), and we may simply take
\[
Q_R=\{0\}.
\]

For
\[
\omega\in\Omega_R,\qquad
u\in U_\omega,\qquad
q\in Q_R,
\]
define
\[
b_{\omega,u,q}
:=
cR\,u+q\omega
\in\mathbb R^l
\subset\mathbb R^n.
\]
Since \(u\perp\omega\),
\begin{equation}
\label{eq:b-properties}
b_{\omega,u,q}\cdot\omega=q,
\qquad
|b_{\omega,u,q}|^2=c^2R^2+q^2.
\end{equation}

Let \(\theta_\omega\) be an \(R^{-1/2}\)-cap centered at the direction \(\omega\).
For every triple \((\omega,u,q)\), choose a standard \(R\)-scale wave packet
\[
f_{\omega,u,q}
\]
associated with the phase-space tile
\[
(\theta_\omega,b_{\omega,u,q}),
\]
and normalize it so that
\[
\|f_{\omega,u,q}\|_2\sim1.
\]
At time \(t=0\), its physical footprint has dimensions
\[
1\times R^{1/2}\times\cdots\times R^{1/2},
\]
with its short direction parallel to \(\omega\). Its spacetime extension is essentially supported on a plank of dimensions
\[
1\times R^{1/2}\times\cdots\times R^{1/2}\times R
\]
whose center line is
\[
L_{\omega,u,q}
=
\left\{
(b_{\omega,u,q}-t\omega,0,t):
|t|\lesssim R
\right\}.
\]

We claim that these wave packets are tangent to the \(R^{1/2}\)-neighborhood of \(\mathcal H_R\).

Indeed, along the center line,
\begin{align}
|b_{\omega,u,q}-t\omega|^2-t^2-c^2R^2
&=
|b_{\omega,u,q}|^2-c^2R^2
-2t\,b_{\omega,u,q}\cdot\omega \notag\\
&=
q^2-2tq.
\label{eq:F-line}
\end{align}
Since
\[
|q|\lesssim R^\beta
\le R^{1/2},
\qquad
|t|\lesssim R,
\]
we have
\[
|q^2-2tq|
\lesssim R^{3/2}.
\]
On the relevant fixed portion of the hyperboloid,
\[
|\nabla(|x'|^2-t^2-c^2R^2)|
\sim R.
\]
It follows that
\[
\operatorname{dist}
\bigl(L_{\omega,u,q},\mathcal H_R\bigr)
\lesssim R^{1/2}.
\]

Moreover, the direction of the center line is
\[
(-\omega,0,1).
\]
If \(z=(x',0,t)\) is a nearby point on \(\mathcal H_R\), then a normal vector is
\[
N_z=(x',0,-t).
\]
Using \eqref{eq:b-properties} and \eqref{eq:F-line}, one obtains
\[
\angle\bigl((-\omega,0,1),T_z\mathcal H_R\bigr)
\lesssim \frac{|q|}{R}+R^{-1/2}
\lesssim R^{-1/2}.
\]
Thus the usual \(R^{1/2+O(\delta)}\)-thickened wave packets satisfy the standard tangency condition.

Notice that when \(n\) is even, \(q=0\), and the center lines are in fact exact rulings of the hyperboloid.

Define
\[
f_R
=
\sum_{\omega\in\Omega_R}
\sum_{u\in U_\omega}
\sum_{q\in Q_R}
\varepsilon_{\omega,u,q}
f_{\omega,u,q},
\]
where
\[
\varepsilon_{\omega,u,q}\in\{-1,1\}
\]
will be chosen later.

By \eqref{eq:omega-number}, \eqref{eq:u-number}, and \eqref{eq:q-number},
\begin{equation}
\label{eq:T-total}
\#\mathbb T_R
\sim
R^{\frac{l-1}{2}}
R^{\frac{l-2}{2}}
R^\beta
=
R^{l-\frac32+\beta}.
\end{equation}
The phase-space centers are separated at the natural wave packet scale, so the packets may be chosen essentially orthogonal. Consequently,
\begin{equation}
\label{eq:F-unified}
\|f_R\|_2^2 \sim
R^{l-\frac32+\beta} = R^\frac{n+1}{2}.
\end{equation}
\medskip

\noindent
\textbf{Step 2. The wave packet density.}

Let
\[
R^{-1/2}\le s\le1,
\]
let \(\tau\) be an \(s\)-cap, and let
\[
V\parallel V_{\tau,R},
\qquad
V_{\tau,R}
\sim
Rs^2\times(Rs)^{n-1}.
\]
Thus
\begin{equation}
\label{eq:V-volume}
|V|
\sim R^ns^{n+1}.
\end{equation}

We estimate the number of selected finest-scale packets with
\[
\theta_\omega\subset\tau
\]
and whose initial footprints lie in \(V\).

Since the \(\omega\)'s are \(R^{-1/2}\)-separated on \(S^{l-1}\),
\begin{equation}
\label{eq:omega-in-tau}
\#\{\omega\in\Omega_R:\theta_\omega\subset\tau\}
\lesssim
(R^{1/2}s)^{l-1}.
\end{equation}

Fix such an \(\omega\). The points
\[
cR\,u,\qquad u\in U_\omega,
\]
are \(R^{1/2}\)-separated on a sphere of radius comparable to \(R\).
If the corresponding footprints are all contained in the same box \(V\), their \(u\)-parameters must lie in an \(O(s)\)-cap on \(S^{l-2}\). Hence
\[
\#\{u\in U_\omega:
\text{the corresponding center is compatible with }V\}
\lesssim
(R^{1/2}s)^{l-2}.
\]

Finally, changing \(q\) moves the center essentially in the short direction of \(V\). Since the short side of \(V\) has length \(Rs^2\),
\begin{equation}
\label{eq:q-in-V}
\#\{q\in Q_R:
b_{\omega,u,q}\text{ is compatible with }V\}
\lesssim
\min\{R^\beta,Rs^2\}.
\end{equation}

Combining \eqref{eq:omega-in-tau}--\eqref{eq:q-in-V},
\[
\#\left\{
T\in\mathbb T_R:
\theta(T)\subset\tau,\;
T^\flat\subset V
\right\}
\lesssim
R^{l-\frac32}
s^{2l-3}
\min\{R^\beta,Rs^2\}.
\]

Since every selected packet has \(L^2\)-mass comparable to \(1\), the definition of wave packet density and \eqref{eq:V-volume} yield
\begin{equation}
\label{eq:W-general}
\mathcal W(f_R,B_R^{n+1})^2
\lesssim
\sup_{R^{-1/2}\le s\le1}
R^{l-n-\frac32}
s^{2l-n-4}
\min\{R^\beta,Rs^2\}.
\end{equation}

We now evaluate this supremum separately in the two parity cases.

\medskip

\noindent
\emph{Odd \(n\).}
Here
\[
l=\frac {n+3}2,
\qquad
\beta=\frac12.
\]
Then
\[
2l-n-4=-1,
\]
and hence
\[
\mathcal W(f_R,B_R^{n+1})^2
\lesssim
\sup_{R^{-1/2}\le s\le1}
R^{-\frac n2}
s^{-1}
\min\{R^{1/2},Rs^2\}.
\]
The two terms inside the minimum are equal when
\[
s=R^{-1/4}.
\]
For \(R^{-1/2}\le s\le R^{-1/4}\), the expression equals
\[
R^{1-\frac n2}s,
\]
which is increasing in \(s\). For \(R^{-1/4}\le s\le1\), it equals
\[
R^{\frac12-\frac n2}s^{-1},
\]
which is decreasing in \(s\). Thus the supremum occurs at
\[
s=R^{-1/4},
\]
and
\begin{equation}
\label{eq:W-even-proof}
\mathcal W(f_R,B_R^{n+1})^2
\lesssim
R^{-\frac{2n-3}{4}}.
\end{equation}

\medskip

\noindent
\emph{Even \(n\).}
Here
\[
l=\frac{n}{2} + 2,
\qquad
\beta=0.
\]
Since \(Rs^2\ge1\) for \(s\ge R^{-1/2}\),
\[
\min\{1,Rs^2\}=1.
\]
Also
\[
2l-n-4=0.
\]
Therefore \eqref{eq:W-general} gives
\begin{equation}
\label{eq:W-odd-proof}
\mathcal W(f_R,B_R^{n+1})^2
\lesssim
R^{-\frac{n-1}{2}},
\end{equation}

\medskip

The appearance of the intermediate scale \(s=R^{-1/4}\) is the only essential difference between the odd- and even-dimensional constructions.

\medskip

\noindent
\textbf{Step 3. A lower bound for the linear \(L^p\) norm.}

Let
\[
U_R
=
N_{CR^{1/2}}(\mathcal H_R)
\cap B_R^{n+1},
\]
restricted, if necessary, to a fixed smooth portion of the hyperboloid on which all geometric constants are uniform.

Since \(\mathcal H_R\) is \(l\)-dimensional at spatial scale \(R\),
\begin{equation}
\label{eq:UR-volume}
|U_R|
\sim
R^l(R^{1/2})^{n+1-l}
=
R^{\frac{n+l+1}{2}}.
\end{equation}

Each selected wave packet spends a time interval of length comparable to \(R\) inside \(U_R\). Consequently,
\[
\|e^{it\sqrt{-\Delta}}f_{\omega,u,q}\|_{L^2(U_R)}^2
\gtrsim R.
\]

Choose the signs
\[
\varepsilon_{\omega,u,q}
\]
independently and uniformly in \(\{-1,1\}\). By orthogonality in expectation,
\[
\mathbb E_\varepsilon
\|e^{it\sqrt{-\Delta}}f_R\|_{L^2(U_R)}^2
=
\sum_{\omega,u,q}
\|e^{it\sqrt{-\Delta}}f_{\omega,u,q}\|_{L^2(U_R)}^2
\gtrsim
R\,\#\mathbb T_R.
\]
Hence there exists a deterministic choice of signs for which
\[
\label{eq:L2-UR-general}
\|e^{it\sqrt{-\Delta}}f_R\|_{L^2(U_R)}
\gtrsim
R^{\frac12}
R^{\frac12(l-\frac32+\beta)}
=
R^{\frac l2-\frac14+\frac\beta2}.
\]

For every \(p\ge2\), H\"older's inequality and \eqref{eq:UR-volume} imply
\begin{align*}
\|e^{it\sqrt{-\Delta}}f_R\|_{L^p(B_R^{n+1})}
&\ge
\|e^{it\sqrt{-\Delta}}f_R\|_{L^p(U_R)} \notag\\
&\ge
|U_R|^{\frac1p-\frac12}
\|e^{it\sqrt{-\Delta}}f_R\|_{L^2(U_R)} \notag\\
&\gtrsim
R^{\frac{n+l+1}{2p}
+\frac{l-n-2}{4}
+\frac\beta2}.
\label{eq:Lp-general}
\end{align*}

For odd \(n\),
\[
l=\frac {n+3}2,
\qquad
\beta=\frac 12,
\]
and therefore
\begin{equation} \label{lower-bound-0}
\|e^{it\sqrt{-\Delta}}f_R\|_{L^p(B_R^{n+1})}
\gtrsim
R^{\frac{3n+5}{4p}-\frac{n-1}{8}}.
\end{equation}

For even \(n\),
\[
l=\frac n2 + 2,
\qquad
\beta=0,
\]
and therefore
\begin{equation} \label{lower-bound-1}
    \|e^{it\sqrt{-\Delta}}f_R\|_{L^p(B_R^{n+1})}
\gtrsim
R^{\frac{3n+6}{4p}-\frac n8}.
\end{equation}

\medskip

\noindent

\textbf{Step 4. Consequence for the mixed wave packet density estimate.}
Assume that
\[
    \left\|e^{it\sqrt{-\Delta}}f_R\right\|_{L^p(B_R^{n+1})}
\lesssim_{\varepsilon}
R^{(n-1)\left(\frac12-\frac1p\right)+\varepsilon}
\|f_R\|_2^{\frac{2}{p}}
\mathcal{W}(f_R,B_R^{n+1})^{1-\frac{2}{p}}.
\]

If $n$ is odd, then \eqref{eq:W-even-proof} and and \eqref{eq:F-unified} give
\[
    \left\|e^{it\sqrt{-\Delta}}f_R\right\|_{L^p(B_R^{n+1})}
\lesssim_{\varepsilon}
R^{\epsilon+\frac{2n-1}{8}+\frac{3}{4p}}.
\]
Combining this with \eqref{lower-bound-0}, we have
\[
\frac{3n+5}{4p}-\frac{n-1}{8}
\le
\frac{2n-1}{8}+\frac{3}{4p}.
\]
Equivalently,
\[
p
\ge
2+\frac8{3n-2}.
\]

If \(n\) is even, then \eqref{eq:W-odd-proof} and \eqref{eq:F-unified} give
\[
    \left\|e^{it\sqrt{-\Delta}}f_R\right\|_{L^p(B_R^{n+1})}
\lesssim_{\varepsilon}
R^{\epsilon+\frac{n-1}{4}+\frac1p}.
\]
Combining this with \eqref{lower-bound-1}, one has
\[
\frac{3n+6}{4p}-\frac n{8}
\le
\frac{n-1}{4}+\frac1p.
\]
and again
\[
p
\ge
2+\frac8{3n-2}.
\]

\end{document}